\newcommand{\tpitchfork}{%
  \vbox{
    \baselineskip\z@skip
    \lineskip-.52ex
    \lineskiplimit\maxdimen
    \m@th
    \ialign{##\crcr\hidewidth\smash{$-$}\hidewidth\crcr$\pitchfork$\crcr}
  }%
}
\newcommand{\mathsout}[1]
{\bgroup\mathchoice
  {\sbox0{$\displaystyle{#1}$}%
    \usebox0\hspace{-\wd0}%
    \rule[0.5\ht0-0.5\dp0-.5pt]{\wd0}{1pt}}%
  {\sbox0{$\textstyle{#1}$}%
    \usebox0\hspace{-\wd0}%
    \rule[0.5\ht0-0.5\dp0-.5pt]{\wd0}{1pt}}%
  {\sbox0{$\scriptstyle{#1}$}%
    \usebox0\hspace{-\wd0}%
    \rule[0.5\ht0-0.5\dp0-.5pt]{\wd0}{1pt}}%
  {\sbox0{$\scriptscriptstyle{#1}$}%
    \usebox0\hspace{-\wd0}%
    \rule[0.5\ht0-0.5\dp0-.5pt]{\wd0}{1pt}}%
\egroup}
\newtheorem{thm}{Theorem}[section]
\newtheorem{cor}[thm]{Corollary}
\newtheorem{lem}[thm]{Lemma}
\newtheorem{prop}[thm]{Proposition}
\newtheorem{defn}[thm]{Definition}
\newtheorem{rem}{Remark}[section]
\newtheorem{asmp}{Assumption}
\newcommand{\eps}{\varepsilon}
\newcommand{\chro}{\stackrel{\longrightarrow}{{\rm exp}}\int}
\newcommand{\pr}{\mbox{\rm pr}}
\newcommand{\ad}{\mbox{ad}}
\journal{Journal Name}
\begin{document}

\begin{frontmatter}


\title{Control  on the Manifolds of Mappings with a View to the Deep Learning}


\author[label1]{Andrei Agrachev}
 \address[label1]{SISSA, via Bonomea 265, Trieste, 34136, Italy; agrachev@sissa.it}
\author[label2]{Andrey Sarychev}
 \address[label2]{DiMaI, via delle Pandette 9, Firenze, 50127,  Italy; andrey.sarychev@unifi.it}

\begin{abstract}
Deep learning of the Artificial Neural Networks (ANN) can be treated as a particular
class of interpolation problems. The goal is to find a neural network
whose input-output map approximates well the desired map
on a finite or an infinite training set. Our idea consists of taking
as an approximant the input-output map, which arises from a nonlinear
continuous-time control system. In the limit such control system can be
seen as a network with a continuum of layers, each one labelled by the
time variable. The values of the controls at each instant of time are the
parameters of the layer.
\end{abstract}

\begin{keyword}
Ensemble Controllability  \sep Optimal Control \sep Artificial Neural Network   \sep Deep Learning
\end{keyword}

\end{frontmatter}



\section{Introduction and problem setting}
The name {\it deep learning}  stands for a set of the methods and the tools
which study the  problems of classification such as image recognition, speech recognition etc.
These   methods  involve
multilayered artificial neural networks (ANN) and one of the key moments
is the {\it training} of the networks on a set of classified objects.
For a simple mathematical model  of the   multilayered ANN and of the process
of  its  training we refer to  \cite{HiH}.

The functioning of the ANN results from
a composition of
the actions of separate neurons.
Each neuron realizes an activation    function $\sigma : \mathbb{R} \rightarrow \mathbb{R}$ with parameters.
There are plenty of  choices for
 the activation   function, that is normally nonlinear monotone  sigmoid-like function. 
The vector functions can be assembled from the scalar  activation functions:
\begin{equation}\label{eq_bar_sigma}
  \bar \sigma :  \mathbb{R}^m \rightarrow  \mathbb{R}^m :    \  \bar \sigma (x_1, \ldots , x_m)=(\sigma(x_1), \ldots , \sigma(x_m)) .  \end{equation}
One can assemble neurons in a multi-layer network  in such a way that the outputs of the neurons from a previous layer
serve as the inputs for the successive level.

One can   introduce   parameters into the activation   functions  via a  substitution of their variables.
For example a linear change of the argument in \eqref{eq_bar_sigma}
results in    $\bar \sigma(Kx+B)$, where $x \in \mathbb{R}^m, \ K \in \mathbb{R}^{m \times m}, \ B  \in \mathbb{R}^m$.

The output of an ANN realizes the  composition of the functions, each one of which corresponds  to a layer:
 \begin{eqnarray}\label{eq_iter}
 F(x)&=& \\
 &=&\bar \sigma \left(K^{[M]}\bar \sigma\left(K^{[M-1]}\left(    \ldots \bar\sigma(K^{[1]}x+B^{[1]}) \ldots \right)+B^{[M-1]}\right) +B^{[M]}\right). \nonumber
\end{eqnarray}

To  set the classification problem    we consider   a finite set of  objects, which are  represented   by the
vectors $x^i \in \mathbb{R}^d, \ i \in \mathcal{I}$. Let $X=\{x^i| i \in \mathcal{I}\}$.
There is    a $\mathbb{R}^s$-valued  classifying function
$c: \mathbb{R}^d \mapsto \mathbb{R}^s$,  defined
on  $X$, which attributes to each object $x^i$ its "class"  $c(x^i) \in \mathbb{R}^s$.

The objective  of the training of an ANN amounts to the adjustment of
the values of the parameters    $K^{[1]}, \ldots , K^{[M]}$,  $B^{[1]}, \ldots , B^{[M]}$ in order to achieve
the best approximation of the classifying function $c(x)$  by the output map \eqref{eq_iter}.
More specifically
one    seeks to   minimize   the  value of the  {\it loss  function}, which measures the discrepancy between the input-output map of the system and the classification function. For example the least square  loss function has form
    \begin{equation}\label{eq_cost}
    C\left(K^{[M]}, \ldots , K^{[1]}, B^{[M]}, \ldots , B^{[1]}\right)=\sum_{i=1}^N \left\|c(x^i)-F(x^i)   \right\|^2_2  \rightarrow \min_{K^{[j]}, B^{[j]}}. 
\end{equation}

  Minimization of    \eqref{eq_cost} results in  a problem of nonlinear programming,
  which   even for a "medium" number  of layers
can turn   rather complex for    classical   approaches. 

In this contribution we
 base on a continuous-time    dynamic or {\it residual network} model for deep learning,
with a continuum of layers,  labelled  by the time variable.
The parameters involved at each layer are the values of the controls at the respective instant of time.
The analogue of the composition \eqref{eq_iter}
is  the end-point of the trajectory or the output of the continuous-time control system in their dependence  on  control.

As in the model, we referred to above, in the control-theoretic setting  one seeks for the values of the parameters (the controls), which
provide the  best approximation of the classifying function by  the output of the control system.
Precise formulations and the description of the  model can be found  in section \ref{sec_ocm}.

The  setting   allows
 for the  application of  analytic methods of {\it dynamic optimization}
such as dynamic programming, Bellman's optimality principle and Pontryagin's maximum principle
together with the corresponding  numerical algorithms.
This approach to the deep learning has been initiated in the last years by a number of scholars;
see for example \cite{EGPZ,LCTE,TG} and references therein. The readers must be warned that  we consider a very
restricted issue of possible application of the methods of ensemble controllability and ensemble optimal control   to the problems of deep learning. Therefore we only cite the references related to this concrete topic, leaving aside not only a huge amount of literature on deep learning, but also on application of the methods
of deep learning to the problems of optimal control.

In the contribution we concentrate on finding the  classes  of control systems, which are able
to guarantee approximation of the classifying functions  at each rate.
It  amounts to   studying the problems of ensemble controllability of the control systems and
 the action of the  flows, generated by the control systems,   on  the manifold of mappings.
We formulate  the sufficient  criteria  (Theorem \ref{thm_grodif}, Corollary \ref{thm_maps})  of ensemble controllability and
provide  examples of the nonlinear control systems which demonstrate approximate controllability property
in the group of diffeomorphisms  of $\mathbb{R}^n$ (Theorem~\ref{thm_GH}), of a torus $\mathbb{T}^n$ (Theorem~\ref{thm_Td} ) and of the $2$-dimensional sphere $\mathbb{S}$ (Theo\-rems \ref{thm_SDiff} and \ref{thm_Diff}).

\section{Neural networks modelled by  control systems}
It is  an easy task   to reformulate optimization  problem \eqref{eq_iter}-\eqref{eq_cost}  as an
optimal control problem for a discrete-time  controlled  dynamic  system.
If one  sets  the variables $z_1, z_2, \ldots , z_M$, which satisfy  the relations
 \begin{equation}\label{eq_dtcs}
   z_1=x, \ z_{j+1}=\bar \sigma \left(K^{[j+1]}z_j+B^{[j+1]}\right), \ j=1, \ldots , M-1,
 \end{equation}
 then the  map, defined by  \eqref{eq_iter}, coincides with the "end-point map"
 \begin{equation}\label{eq_fxzn}
   F(x,K^{[1]}, \ldots K^{[M]},  B^{[1]}, \ldots ,B^{[M]})=z_M.
 \end{equation}

Alternatively one can introduce the intermediate variables $y_j$ and define the dynamics
 \begin{equation}\label{eq_2dtcs}
  z_1=x, \  y_{j+1}= K^{[j+1]}z_j+B^{[j+1]}, \    z_{j+1}=\bar \sigma \left( y_{j+1}\right), \ j=1, \ldots , M-1,
 \end{equation}
 getting again  formula \eqref{eq_fxzn} for the map \eqref{eq_iter}.


Denote  $z_j^i$  (respectively $y_j^i,z_j^i$)   the points of the trajectories  of
equation \eqref{eq_dtcs}  (respectively \eqref{eq_2dtcs}),  which start  with the initial data
$z^i_1=x^i \in X, \ i \in \mathcal{I}$. Then    the problem of the best least square approximation  \eqref{eq_cost}  takes the form
  \begin{equation}\label{eq_zcost}
    \hat{C}\left(K^{[2]}, \ldots , K^{[M]}, B^{[2]}, \ldots , B^{[M]}\right)=\sum_{i\in \mathcal{I}} \left\|c(x^i)-z^i_M   \right\|^2_2  \rightarrow \min .
  \end{equation}

Problems \eqref{eq_dtcs},\eqref{eq_zcost},
respectively \eqref{eq_2dtcs},\eqref{eq_zcost}  are Mayer problems of optimal control for the control systems with discrete time with free end-point.
There are quite few  numerical  algorithms developed for this class of problems,
but we do not treat them in this contribution, making emphasis  instead  on the continuous-time control systems.
\footnote{It is worth mentioning    that  the theoretical study of the
discreet-time optimal control problems manifests additional   complexities
in comparison with the continuous-time case, unless additional regularity assumptions, such as convexity are imposed.}

The way  to the representation of the input-output map
\eqref{eq_2dtcs}-\eqref{eq_fxzn}
as an output of a continuous-time control system is rather straightforward.
Let us consider a  system, which for the sake of the computational simplicity we choose control-linear:
\begin{equation}\label{eq_ctcs}
 \dot{z}=f^0(z)u_0(t)+\sum_{i=1}^{r}f^i(z)u_i(t), z \in \mathbb{R}^m.
\end{equation}
For the purpose of our illustration  we choose smooth vector field $f^0(z)$ to be nonlinear, and the vector fields $f^1(z), \ldots , f^r(z)$,  to  form a basis of the space of the affine vector fields in $\mathbb{R}^m$.

Require  the diffeomorphism
$e^{f^0(z)}$ to coincide with  $\bar \sigma (z)$, so that the map $\bar \sigma (z)$ is generated by  control system \eqref{eq_ctcs}, driven by the constant control
$u(t)=(1,0, \ldots , 0)$ on a unit time interval. Each affine  diffeomorphism $Kx+B$, with $\det K >0$,    can be represented as a composition of the diffeomorphisms $e^{a(z)}$, where $a(z)$, are  affine vector fields in $\mathbb{R}^m$. Hence such diffeomorphisms are generated by the control system \eqref{eq_ctcs}, driven by the piecewise-constant controls.

 Therefore  the composition \eqref{eq_iter} or, the same the
output map \eqref{eq_fxzn} of the discrete-time system \eqref{eq_2dtcs}
can be represented as the endpoint map of the continuous-time system \eqref{eq_ctcs}, driven by a piecewise-constant control.

\section{Ensemble Optimal Control Model for the training of control-theoretic ANN}
\label{sec_ocm}

\subsection{Ensemble Optimal Control Model}
We consider a  training set $X=\{x^1, \ldots , x^N\} \subset \mathcal{M}$, consisting of $N$ points of
a connected Riemannian manifold $\mathcal{M}$.
In what follows
     $\mathcal{M}$ will be  a  submanifold of $R^d$.

      We  set  an optimal control  model for the  training process of an ANN, which   involves a  control system in $\mathbb{R}^d$, which 
\begin{equation}\label{eq_cafs}
  \dot{x}=\sum_{i=1}^{r}f^i(y)u_i(t), \ y \in \mathcal{M}.
\end{equation}

We introduce the  terminology of the ensembles of points.
A finite ensemble of points of a smooth manifold $\mathcal{M}$  is  an $N$-ple  $\gamma =(x^1, \ldots , x^N) \in \mathcal{M}^N$,
 whose  components $x^j \in \mathcal{M}$ are pairwise distinct: $ i \neq  j  \Rightarrow   x^i \neq x^j   $.
Thus if  $\Delta^N \subset \mathcal{M}^N$  stands for  the set of $N$-ples $(x^1, \ldots , x^N) \in \mathcal{M}^N$ with (at least) two coinciding components,
then the space  $\mathcal{E}_N(\mathcal{M})$ of the ensembles of $N$ points of $\mathcal{M}$ is   the complement of $\Delta^N: \ \mathcal{E}_N(\mathcal{M})=\mathcal{M}^N \setminus \Delta^N=\mathcal{M}^{(N)}$.
Note that whenever  $\dim \mathcal{M} >1, \  \mathcal{M}^{(N)}$ is
an open connected subset and a submanifold of $\mathcal{M}^N$.

     Introduce  a classifying map  $c: X \to  \mathcal{C}$, where $\mathcal{C}$ is a connected Riemannian manifold.

Our goal is to approximate  the  map $c$ by an action of the
flow $P_t$,  generated by  the control system \eqref{eq_cafs}
which is driven by  a control $u(t)=(u_1(t), \ldots , u_r(t))$.
The flow $P_t$ acts on  an ensemble  $(x^1, \ldots , x^N)$ as $$P_t(x^1, \ldots , x^N)=(z_1(t), \ldots , z_k(t))$$ where
 $z_k(t)$ are the points
of the trajectories of the Cauchy problems
\begin{eqnarray}\label{eq_N_ens}
  \dot{z}_k=\sum_{i=1}^{r}f^i(z_k)u_i(t), \ k=1, \ldots , N, \\
\label{eq_inic_gen}
  z_k(0)=x^k, \ k=1, \ldots , N .
\end{eqnarray}
We introduce an output map
 \[ p: \mathcal{M} \to \mathcal{C}  ,\]
 which is a submersion in the cases, we consider.

We fix $T>0$ and   seek to minimize
\begin{equation}\label{eq_discrep0}
  \frac{1}{2}\sum_{k=1}^N\left\|p(z_k(T))-c(x^k)\right\|^2
\end{equation}
under constraints \eqref{eq_N_ens}-\eqref{eq_inic_gen}.

The infimum of \eqref{eq_discrep0} is either positive or null.
The distinction is related
to the  presence or the lack of controllability
of  system \eqref{eq_cafs} in the space of  finite  ensembles of points.
The problems of controllability have been addressed
in  \cite{AS20},
where we
 arranged examples of the systems, which are controllable in the space of  finite  ensembles of points. We  proved
 that   for   arbitrary $N$     generic $r$-ples of vector fields
$f^1(z), \ldots , f^r(z) \in \mbox{Vect}\left(R^d\right)$ manifest  this property.

Note that even for ensemble controllable systems,
the greater is $N$,
more complex are the controls
$u_1(t), \ldots , u_r(t)$, which are needed to achieve controllability.

For this reason  we opt  for a tradeoff between the rate or quality of the approximation (minimization of \eqref{eq_discrep0})
 and the complexity of the needed control, introducing   the loss functional $\mathcal{J}$
\begin{equation}\label{eq_functional}
 \mathcal{J}=  \frac{1}{2}\sum_{k=1}^N\left\|p(z_k(T))-c(x^k)\right\|^2 +\frac{\beta}{2}\int_0^T\left(\sum_{i=1}^{r}|u_i(t)|^2\right)dt  \rightarrow \min  .
\end{equation}

 Problem  \eqref{eq_N_ens}-\eqref{eq_inic_gen}-\eqref{eq_functional} is   Bolza optimal control problem with free end-point.
   In what regards study of the optimal control problem   we limit ourselves
 to the formulation  (in the following subsection)
of the first-order optimality condition for the problem.
In the rest of the contribution
we concentrate on the problems of ensemble controllability.

\subsection{Equations of Pontryagin Maximum Principle for Ensemble Optimal Control Problem \eqref{eq_N_ens}-\eqref{eq_inic_gen}-\eqref{eq_functional}}
We start  introducing  the pre-Hamiltonian for \eqref{eq_N_ens},\eqref{eq_functional}
\begin{equation}\label{eq_preHam}
  H=\sum_{i=1}^r \left(\sum_{k=1}^N \psi_k f_i(z_k)\right)u_i- \frac{\beta}{2}\left(\sum_{i=1}^r u_i^2\right) ,
\end{equation}
where $\psi_k \in R^{d*}, \ k=1, \ldots , N$.

The adjoint equations of the corresponding pre-Hamiltonian system are
\begin{equation}\label{eq_adjoint}
  \dot{\psi}_k=-\frac{\partial H}{\partial z_k}=-\psi_k\sum_{i=1}^r \frac{\partial f_i}{\partial z}(z_k)u_i(t), \ k=1, \ldots , N.
\end{equation}
The  end-point conditions for the adjoint variables are
\begin{equation}\label{eq_epsi}
  \psi_k(T)=-(p(z_k(T))-c(x^k))^*\frac{\partial p}{\partial z}(z_k(T)), \ k=1, \ldots , N.
\end{equation}
Let  $z=(z_1, \ldots , z_N)$,  $\psi=(\psi_1, \ldots , \psi_N)$, $u=(u_1, \ldots , u_r)$.
 Introducing the functions
  \[F_i(x,\psi)=\sum_{k=1}^N  \psi_k  f_i(z_k), \ i=1, \ldots , r\]
we bring  the pre-Hamiltonian  \eqref{eq_preHam}  to the form
\begin{equation}\label{eq_preHam_FG}
H(z,\psi,u)=\sum_{i=1}^r F_i(z,\psi)u_i- \frac{\beta}{2}\left(\sum_{i=1}^r u_i^2\right). 
\end{equation}
According to the Pontryagin's Maximum Principle if $\tilde u(t), \tilde z(t)$ are the  optimal control and the corresponding optimal trajectory
 of the problem, then there must exist $\beta \geq 0$ and an   adjoint covector $\tilde \psi(t)$, which satisfy the equations \eqref{eq_adjoint} and \eqref{eq_epsi} and such that
 \[H(\tilde z(t),\tilde \psi(t), \tilde u(t))=\max_u H(\tilde z(t),\tilde \psi(t), u). \]
 By the maximality condition we get $\frac{\partial H}{\partial u_i}|_{(\tilde u(t), \tilde z(t))}=0, \ i=1, \ldots r$, which  in the normal ($\beta >0$) case implies:
\begin{equation}\label{eq_uiFG}
  u_i=\beta^{-1}F_i(z,\psi), \ i=1, \ldots ,r.
\end{equation}


Substituting expressions \eqref{eq_uiFG}  into pre-Hamiltonian \eqref{eq_preHam_FG}  we obtain the maximized (with respect to $u$) Hamiltonian
\begin{equation*}\label{eq_Ham}
  M(z,\psi)=\frac{\beta^{-1}}{2}\sum_{i=1}^r\left( F_i(z,\psi)\right)^2.
\end{equation*}


\section{Finite ensemble controllability via Lie algebraic methods}
\label{sec_LAAC}

%

We  approach ensemble controllability from the viewpoint of geometric control theory,
in the spirit of  what has been done  in our previous publication \cite{AS20}.
 See also  preprint \cite{TG}
where the Lie algebraic methods  are  applied to a different class of systems in the context of deep learning.




We start with  basic definitions.
\begin{defn}[finite ensemble controllability]
System \eqref{eq_cafs} has   the property of finite ensemble controllability if for each
$N=1,2, \ldots$,   for each $T>0$  and for any two $N$-ples  $x_\alpha=( x_\alpha^1, \ldots , x_\alpha^N)$,  $x_\omega= ( x_\omega^1, \ldots , x_\omega^N) \in \mathcal{M}^{(N)}$ there exists
a control $u(t)=(u_1(t), \ldots , u_r(t))$ which steers the corresponding system \eqref{eq_N_ens}  from $x_\alpha$ to $x_\omega$ in time $T$.
\end{defn}

\begin{rem}
If  system \eqref{eq_N_ens}  can steer the point $x_\alpha$ to $x_\omega$ in time $T>0$ by means of a control $u(t), \ t \in [0,T]$, then
  it can do the same   in any time $T'>0$ by means of the control $\frac{T}{T'} u\left(\frac{T}{T'}t\right), \ t \in [0,T']$.
\end{rem}

For a smooth vector field $X \in \mbox{\rm Vect} \mathcal{M}$  consider its $N$-fold - the vector field on $\mathcal{M}^{(N)}$, defined as
$X^N(x^1, \ldots , x^N)=(X(x^1), \ldots ,X(x^N)).$    System \eqref{eq_N_ens} can be given form $\dot \gamma =X^N(\gamma), \ \gamma \in \mathcal{M}^{(N)}$.

For $X,Y \in \mbox{\rm Vect } \mathcal{M}$, and $N \geq 1$ we define the Lie bracket of the $N$-folds $X^N,Y^N$  on $\mathcal{M}^{(N)}$  "componentwise":
$[X^N,Y^N]=[X,Y]^N$  -  the $N$-fold  of the  Lie bracket  $[X,Y]$ of $X,Y$ on $\mathcal{M}$.
The same holds for the iterated Lie brackets.

We denote $\mbox{Lie}\{f_1, \ldots , f_r\}$  the Lie algebra  generated by the vector fields  $f_1, \ldots , f_r$, and $\mbox{Lie}\{f^N_1, \ldots , f^N_r\}$ the Lie algebra generated by their $N$-folds.

For the vector fields $f_1, \ldots f_r$ on $\mathcal{M}$,  their  $N$-folds $f_1^N, \ldots , f_r^N$  are called
 bracket generating  on $\mathcal{M}^{(N)}$, if the evaluations
 of the iterated Lie brackets  of $f_1^N, \ldots , f_r^N$ at each $\gamma=(x^1, \ldots , x^N)   \in \mathcal{M}^{(N)}$, span the tangent space $T_\gamma \mathcal{M}^{(N)}=\bigotimes_{j=1}^NT_{x_j}\mathcal{M}$.
Evidently  for $N>1$ the bracket generating property for $f_1^N, \ldots , f_r^N$ on $\mathcal{M}^{(N)}$ is strictly stronger, than the same  property for $f_1, \ldots , f_r$ on $\mathcal{M}$.

 Rashevsky-Chow theorem (\cite{ASach})   implies
 \begin{prop}
 \label{thm_brag}
If  $\dim \mathcal{M} >1$ and $\forall N \geq 1$ the $N$-folds  $f_1^N, \ldots , f_s^N$  are
 bracket generating  on $\mathcal{M}^{(N)}$, then   system \eqref{eq_cafs}
  has the property of  finite ensemble controllability on $\mathcal{M}$.
 \end{prop}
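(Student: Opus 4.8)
The plan is to recast the ensemble dynamics as a single driftless control-affine system on the configuration space $\mathcal{M}^{(N)}$ and then invoke the Rashevsky--Chow theorem directly. As already observed, system \eqref{eq_N_ens} takes the compact form $\dot\gamma = \sum_{i=1}^{r} f_i^N(\gamma)\,u_i(t)$ with state $\gamma \in \mathcal{M}^{(N)}$; this is a symmetric (driftless) system whose admissible velocities at $\gamma$ are exactly the values of the distribution spanned by the $N$-folds $f_1^N, \ldots, f_r^N$.

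The first step is to check the two hypotheses of Rashevsky--Chow. Connectedness of the state space is secured by the assumption $\dim\mathcal{M} > 1$, which, as noted above, makes $\mathcal{M}^{(N)} = \mathcal{M}^N \setminus \Delta^N$ an open connected submanifold of $\mathcal{M}^N$. The bracket generating condition is precisely the standing hypothesis: the iterated Lie brackets of $f_1^N, \ldots, f_r^N$ span $T_\gamma\mathcal{M}^{(N)}$ at every $\gamma$. With both hypotheses in place, the Rashevsky--Chow theorem of \cite{ASach} asserts that the reachable set from any ensemble $x_\alpha \in \mathcal{M}^{(N)}$ is all of $\mathcal{M}^{(N)}$, so every target ensemble $x_\omega$ is attainable.

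It then remains to reconcile this conclusion with the fixed-time requirement in the definition of finite ensemble controllability. Here I would exploit the driftless structure: Rashevsky--Chow produces a control steering $x_\alpha$ to $x_\omega$ in some positive time, and the time-rescaling observation recorded in the Remark just above upgrades it to a control realizing the same transfer in the prescribed time $T$. Since the entire argument is uniform over $N$ and valid for every $N \ge 1$, finite ensemble controllability follows. I do not anticipate a substantive obstacle, as the statement is essentially a corollary of Rashevsky--Chow applied to the $N$-fold system; the only points that genuinely require the hypotheses are the two prepared by the surrounding text---connectedness of $\mathcal{M}^{(N)}$, which fails in general when $\dim\mathcal{M} = 1$, and the passage from free-time to fixed-time controllability afforded by the symmetric structure.
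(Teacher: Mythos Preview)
Your proposal is correct and follows exactly the paper's approach: the paper states the proposition as an immediate consequence of the Rashevsky--Chow theorem, and you have simply spelled out the implicit details (connectedness of $\mathcal{M}^{(N)}$ from $\dim\mathcal{M}>1$, the bracket-generating hypothesis, and the time-rescaling remark for the fixed-time requirement).
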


In \cite{AS20} we  proved that the latter property holds for each  $N$ and a generic $r$-ple $f_1, \ldots , f_r$ of vector fields.
In the present context  it is
more   convenient to check   a  stronger property, which  implies   the bracket generating property for any $N$.

Let us  introduce  the standard notation for the seminorms in the space of smooth vector fields on a manifold $\mathcal{M}$:
for a compact $K \subset \mathcal{M}$ and $r \geq 0$
 \[ \|X\|_{r,K} =\sup_{x \in K}\left(\sum_{0 \leq |\beta| \leq r }\left|D^\beta X(x)\right|\right), \ \|X\|_{r} =\sup_{x \in  \mathcal{M}}\left(\sum_{0 \leq |\beta | \leq r }\left|D^\beta X(x)\right|\right) .   \]

In the formulations of  controllability results  we
invoke  the following  assumptions for  the vector fields
 $f_1, \ldots , f_r \in \mbox{Vect}(\mathcal{M})$, which define   control system \eqref{eq_cafs}.

\begin{asmp}[boundedness]
\label{asmp_b}
  The vector fields   $f_j(x), \ j=1, \ldots , r$,  are    $C^\infty$-smooth   and  bounded on  $\mathcal{M}$
      together with their covariant   derivatives of each order.
\end{asmp}

\begin{asmp}[Lie algebra approximating property]
\label{asmp_a}
  A system of smooth  vector fields
$f_1, \ldots , f_r \in \mbox{Vect}(\mathcal{M})$
demonstrates the Lie algebra approximating  property,  if   $\exists m \geq 1$ such that
 for each $C^m$-smooth  vector field $Y \in \mbox{Vect}(\mathcal{M})$ and each compact $K \subset  \mathcal{M}$
  there holds:
   \[   \inf\{\left\|Y-X\right\|_{0,K}| \   X \in  \mbox{Lie}\{ f_1, \ldots , f_r\} \}=0.\]
\end{asmp}

We show that this property suffices to guarantee  finite ensemble controllability.

\begin{thm}[Lie algebra approximating  property and  finite ensemble controllability]
\label{dethm}
If  $\dim \mathcal{M} >1$ and the vector fields
$f_1, \ldots , f_r$ meet Assumptions \ref{asmp_b} and \ref{asmp_a},
 then $\forall N \geq 1$
system \eqref{eq_N_ens} is   controllable in the space $\mathcal{E}_N(\mathcal{M})$ of ensembles of $N$ points.
\end{thm}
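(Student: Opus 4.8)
The plan is to reduce the statement to the bracket generating property and then invoke Proposition~\ref{thm_brag}. By that proposition it suffices to prove that, for every $N\ge 1$, the $N$-folds $f_1^N,\ldots,f_r^N$ are bracket generating on $\mathcal{M}^{(N)}$; that is, for each $\gamma=(x^1,\ldots,x^N)\in\mathcal{M}^{(N)}$ the evaluations at $\gamma$ of the iterated Lie brackets of $f_1^N,\ldots,f_r^N$ span $T_\gamma\mathcal{M}^{(N)}=\bigoplus_{j=1}^N T_{x^j}\mathcal{M}$. The decisive observation is that, since $[X^N,Y^N]=[X,Y]^N$, every iterated bracket of the $N$-folds is the $N$-fold of the corresponding iterated bracket of $f_1,\ldots,f_r$. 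Hence I need not compute a single bracket: it is enough to understand how the whole Lie algebra $\mbox{Lie}\{f_1,\ldots,f_r\}$ acts under the $N$-fold evaluation.

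To this end fix $N$ and $\gamma$, and consider the evaluation map
\[
 \mathrm{ev}_\gamma:\mbox{Lie}\{f_1,\ldots,f_r\}\longrightarrow T_\gamma\mathcal{M}^{(N)},\qquad
 \mathrm{ev}_\gamma(X)=(X(x^1),\ldots,X(x^N)).
\]
This map is $\mathbb{R}$-linear, so its image $V_\gamma$ is a linear subspace of the finite-dimensional space $T_\gamma\mathcal{M}^{(N)}$; moreover, since $\mbox{Lie}\{f_1,\ldots,f_r\}$ is spanned by iterated brackets of the $f_i$, the subspace $V_\gamma$ coincides with the span of the evaluations at $\gamma$ of the iterated brackets of the $N$-folds. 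Thus the bracket generating property at $\gamma$ is exactly the assertion $V_\gamma=T_\gamma\mathcal{M}^{(N)}$, i.e.\ the surjectivity of $\mathrm{ev}_\gamma$.

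The heart of the argument is to prove that $V_\gamma$ is dense in $T_\gamma\mathcal{M}^{(N)}$; surjectivity then follows automatically, because a dense linear subspace of a finite-dimensional normed space is the whole space. Given an arbitrary target $(v_1,\ldots,v_N)$ with $v_j\in T_{x^j}\mathcal{M}$, I first build a $C^\infty$ (in particular $C^m$) vector field $Y\in\mbox{Vect}(\mathcal{M})$ with $Y(x^j)=v_j$ for every $j$; this is possible because the components $x^1,\ldots,x^N$ are pairwise distinct, so one may interpolate the prescribed values using bump functions supported on disjoint coordinate neighborhoods. Choosing a compact $K\subset\mathcal{M}$ containing $x^1,\ldots,x^N$ and applying Assumption~\ref{asmp_a}, I obtain, for every $\eps>0$, a field $X\in\mbox{Lie}\{f_1,\ldots,f_r\}$ with $\|Y-X\|_{0,K}<\eps$; in particular $|X(x^j)-v_j|<\eps$ for all $j$, so $\mathrm{ev}_\gamma(X)$ lies within order $\eps$ of the target. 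As the target was arbitrary, $V_\gamma$ is dense, hence equal to $T_\gamma\mathcal{M}^{(N)}$.

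Since $\gamma\in\mathcal{M}^{(N)}$ and $N\ge 1$ were arbitrary, the $N$-folds are bracket generating on every $\mathcal{M}^{(N)}$, and Proposition~\ref{thm_brag} yields finite ensemble controllability. Assumption~\ref{asmp_b} enters only to keep the dynamics well posed: boundedness of $f_1,\ldots,f_r$ together with their covariant derivatives makes the fields complete, so the flows underlying Proposition~\ref{thm_brag} exist for all time and the steering can be realized on the prescribed interval. The one step that carries real content is the passage from $C^0$-approximation to exact spanning: a priori the approximating property only lets one reach tangent directions approximately, but the linearity of $\mathrm{ev}_\gamma$ together with the finite dimensionality of $T_\gamma\mathcal{M}^{(N)}$ upgrades approximate reachability to exact surjectivity. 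The interpolation producing $Y$ is, by comparison, routine.
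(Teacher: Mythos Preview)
Your proof is correct and follows essentially the same route as the paper: you reduce to the bracket generating property via Proposition~\ref{thm_brag}, introduce the evaluation map into $T_\gamma\mathcal{M}^{(N)}$, use Assumption~\ref{asmp_a} to show its image is dense, and conclude surjectivity from finite-dimensionality. The only cosmetic difference is that you spell out the interpolation producing $Y$ via bump functions, whereas the paper simply notes that the evaluation map from $\mbox{Vect}^m(\mathcal{M})$ is ``obviously surjective''.
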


\begin{proof}
Fix $N$.  Choose  an ensemble
$\gamma =(x^1, \ldots , x^N) \in   \mathcal{M}^{(N)}$.
We prove that the $N$-folds
$f_1^N, \ldots , f_r^N$  are
 bracket generating  at $\gamma$.

 Pick $m$ for which the Lie algebra approximating property holds.
 Consider the space $\mbox{Vect}^m(\mathcal{M})$  of $C^m$-smooth vector fields on $\mathcal{M}$
 and define for each $\gamma \in   \mathcal{M}^{(N)}$
  the evaluation  map $E_\gamma :\mbox{Vect}^m(\mathcal{M}) \mapsto T_\gamma \mathcal{M}^{(N)} $:
 \[E_\gamma (Y)=Y^N(\gamma)=\left(Y(x^1), \ldots , Y(x^N)\right) .   \]

 This linear map is obviously surjective and
 continuous with respect to  $C^0$-metric in   $\mbox{Vect}^m(\mathcal{M})$.
 By virtue of   Assumption \ref{asmp_a}  the image \linebreak
 $E_\gamma\left(\mbox{Lie}\{f_1, \ldots , f_r\}\right)$ is a dense linear subspace of
   $T_\gamma \mathcal{M}^{(N)}$ and hence must coincide with it.
\end{proof}

\begin{rem}
  Below we provide  formulations for specific  cases in which $\dim \mathcal{M}=1$.
\end{rem}

\section{Lie Algebra Strong Approximating Property.   Controllability in the Diffeomorphism Groups  and the Manifolds of Mappings}
\label{sec_grodiff}

In the previous Section we dealt with finite ensembles of points.
In this section we show   that if a stronger approximating property holds for the Lie algebra
 $\mbox{Lie}\{f_1, \ldots , f_r\}$, associated to control system \eqref{eq_cafs},
 then approximate controllability of system \eqref{eq_cafs}  holds in the group  $\mbox{Diff}^c_0$
  of diffeomorphisms on $\mathcal{M}$ and on  the manifolds  of smooth mappings of $\mathcal{M}$.

In our proofs we make  occasional  use of  few notations of chronological calculus   for the flows generated
by the time-dependent vector fields (\cite{AG77}).
In particular  for a vector field $X_t(x)$, which is smooth in $x$
and locally integrable in $t$ we denote by
$\chro_{t_0}^tX_sds$  the flow $P_t$,
 generated by the time-dependent differential equation $\dot x=X_t(x), \ P_{t_0}=I$.
  If $X_t$ is time independent: $X_t(x) \equiv X(x)$, then the flow
 is denoted by $P_t=e^{(t-t_0)X}$.  A  brief presentation of the chronological calculus  can be found in \cite{ASach}.

The following definition has been used  in \cite{AS20}.
  Put    for $\ell>0$ and a compact $K \subset \mathcal{M}$:
  \[\mbox{Lie}^\ell_{1,K}\{f_1, \ldots , f_r\}=\left\{X(x) \in \mbox{Lie}\{f_1, \ldots , f_r\}\left| \
  \|X\|_{1,K} < \ell \right.\right\}.\]

\begin{asmp}[Lie algebra strong approximating property]
\label{asmp_s}
  A system of smooth  vector fields
$f_1, \ldots , f_r \in \mbox{Vect}(\mathcal{M})$
possesses Lie algebra strong approximating  property, if
  $\exists m \geq 1$, such that for each $C^m$-smooth vector field $Y \in \mbox{Vect}(\mathcal{M})$ and each compact
  $K \subset \mathcal{M} \
\exists \ell >0$ for which:
    \begin{equation}\label{eq_lie_strong}
     \inf \left\{\left. \sup_{x \in K }\left|Y(x)-X(x) \right| \  \right|
\ X \in  \mbox{Lie}^\ell_{1,K}\left\{f_1, \ldots , f_r\right\}\right\}=0.
    \end{equation}
\end{asmp}

Denote by $\mbox{Diff}^c_0$ the connected component of the identity of
the group of the compactly supported diffeomorphisms of $\mathcal{M}$.

\begin{thm}[$C^0$-approximate controllability in the group of diffeomorphisms]
\label{thm_grodif}
   Let     $\hat P \in \mbox{Diff}^c_0(\mathcal{M})$.
  Let   $C^\infty$-smooth vector fields   $f_j(x), \ j=1, \ldots , r,$
  meet Assumptions \ref{asmp_b} and \ref{asmp_s}. %
    Then for each $K \subset \mathcal{M}$ and each  $\eps >0$ there
   exists a  control       $u(t)=(u_1(t), \ldots , u_r(t)),  \ t \in [0,T]$, such that for  the  corresponding flow
  \begin{equation}\label{eq_PTuv}
  P_t= \chro_0^t \left(\sum_{j=1}^r  f_j(x)u_j(\tau)\right)d\tau , \ x \in \mathcal{M}
  \end{equation}
      generated by system \eqref{eq_cafs},   the diffeomorphism  $P_T$    $\eps$-approximates $\hat P$ in $C^0$ on $K$:
   $\left\|\hat P-P_T\right\|_{0,K} < \eps .    $
\end{thm}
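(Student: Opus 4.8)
The plan is to realize $\hat P$ as a limit, in the $C^0$-topology on the compact set $K$, of flows produced by system \eqref{eq_cafs}, passing through two intermediate families of flows: those generated by frozen elements of $\mbox{Lie}\{f_1,\ldots,f_r\}$, and, before that, those generated by an isotopy joining $I$ to $\hat P$. By the Remark following the definition of finite ensemble controllability the terminal time $T$ plays no role, since time may be rescaled freely, so I will build the control on $[0,1]$ and rescale at the end.

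First, since $\hat P$ lies in the identity component $\mbox{Diff}^c_0(\mathcal M)$, I would join it to the identity by a smooth isotopy $t\mapsto \hat P_t$, with $\hat P_0=I$, $\hat P_1=\hat P$, all supported in one fixed compact. This isotopy is the flow of the time-dependent, compactly supported (hence $C^m$) vector field $Y_t=\dot{\hat P}_t\circ\hat P_t^{-1}$, i.e. $\hat P=\chro_0^1 Y_s\,ds$. Enlarging $K$ to a compact $\hat K$ with enough margin to contain every trajectory that will appear (the enlargement is finite because $Y_t$ and the $f_j$ are bounded, by Assumption \ref{asmp_b}), I would then discretize: partitioning $[0,1]$ into $n$ equal subintervals and freezing $Y_t$ at the nodes $t_k$ gives
\[
\hat P \;=\; \chro_0^1 Y_s\,ds \;\approx_{C^0,\hat K}\; e^{\frac1n Y_{t_{n-1}}}\circ\cdots\circ e^{\frac1n Y_{t_0}},
\]
the error tending to $0$ as $n\to\infty$ by the standard Euler/product-formula estimate for time-ordered exponentials. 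I fix such an $n$ making this error below $\eps/3$.

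Next I would invoke Assumption \ref{asmp_s}. For each of the finitely many frozen fields $Y_{t_k}$, and a common $\ell$ (take the maximum over $k$), there are $X_k\in\mbox{Lie}^\ell_{1,\hat K}\{f_1,\ldots,f_r\}$ with $\sup_{\hat K}|Y_{t_k}-X_k|$ as small as desired. The uniform $C^1$-bound $\|X_k\|_{1,\hat K}<\ell$ furnishes a Lipschitz constant for $X_k$ on $\hat K$, so a Gronwall estimate turns $C^0$-closeness of the fields into $C^0$-closeness of their time-$\frac1n$ flows: choosing the $X_k$ close enough one gets $e^{\frac1n X_{n-1}}\circ\cdots\circ e^{\frac1n X_0}\approx_{C^0,\hat K}e^{\frac1n Y_{t_{n-1}}}\circ\cdots\circ e^{\frac1n Y_{t_0}}$ within $\eps/3$. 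The $C^1$-bound is exactly why the \emph{strong} approximating property, rather than Assumption \ref{asmp_a} alone, is needed here, since it also guarantees the $X_k$-trajectories do not escape $\hat K$.

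Finally I would realize the product $\prod_k e^{\frac1n X_k}$ by the control system up to $\eps/3$ in $C^0$ on $K$. Each $X_k$ is a finite linear combination of iterated Lie brackets of $f_1,\ldots,f_r$; the control system \eqref{eq_cafs} directly produces the flow of any element of $\spn\{f_1,\ldots,f_r\}$ with piecewise-constant controls, while the flow of a bracket is approximated $C^0$-uniformly on compacts by a group commutator $e^{-s Y}e^{-s X}e^{s Y}e^{s X}$ with $s=\sqrt{\text{time}}$. Iterating this commutator construction along the bracket structure, and concatenating over $k$, expresses $\prod_k e^{\frac1n X_k}$ as a $C^0$-limit on $\hat K$ of flows of \eqref{eq_cafs}. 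Summing the three error budgets yields $\|\hat P-P_T\|_{0,K}<\eps$ after rescaling time to $[0,T]$. I expect the main obstacle to be this last step: the commutator approximations of bracket flows involve excursions of order $\sqrt{\text{time}}$, \emph{larger} than the nominal step $\frac1n$, so one must carry a uniform a priori bound (via Assumption \ref{asmp_b}) guaranteeing that the nested, concatenated commutator trajectories stay inside $\hat K$ and that their accumulated $C^0$-errors remain controlled — the bookkeeping of these uniform-on-$\hat K$ estimates, rather than any single estimate, is the delicate part.
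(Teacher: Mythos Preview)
Your argument is sound in outline, but it takes a genuinely different route from the paper. The paper's own proof is essentially two sentences: it joins $I$ to $\hat P$ by an isotopy $\hat P_t$, extracts the generating time-dependent field $Y_t=(P_t)^{-1}_*\frac{dP_t}{dt}$, and then invokes Theorem~4.3 of \cite{AS20} (applied to $Y_t$, the diffeotopy $K_t=\hat P_t(K)$, and system~\eqref{eq_cafs}) as a black box that delivers the approximating control directly. All of the analytic work is delegated to that reference.

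What you have written is, in effect, a self-contained reconstruction of the mechanism inside that cited theorem: Euler discretization of the chronological exponential, replacement of each frozen $Y_{t_k}$ by an element $X_k\in\mbox{Lie}^\ell_{1,\hat K}\{f_j\}$ via Assumption~\ref{asmp_s}, Gronwall to pass from $C^0$-closeness of fields to $C^0$-closeness of flows (and you correctly pinpoint that the uniform $C^1$ bound $\ell$ is exactly what makes this step work, hence why Assumption~\ref{asmp_s} rather than~\ref{asmp_a} is required), and finally realization of each $e^{\frac1n X_k}$ by the control system through iterated commutators. Your identification of the delicate point is accurate: the nested-commutator approximation of bracket flows produces excursions of order $\sqrt{t}$ rather than $t$, and tracking their cumulative effect through an $n$-fold composition while keeping everything inside $\hat K$ is genuine bookkeeping, not a triviality. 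The paper sidesteps this entirely by citation; your approach buys self-containment and transparency about where each hypothesis enters, at the cost of having to carry out that bookkeeping.
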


{\it Proof.}  Join the identity $I$ with $\hat P$
 by  a
 curve $t \mapsto \hat P_t(x), \ t \in [0,T]$  in  $\mbox{Diff}^c_0(\mathcal{M})$. Without loss of generality we may assume that
 $(t,x) \mapsto\hat P_t(x)$  is  $C^1$-smooth.
    The curve $t \mapsto \hat P_t(x)$ can be represented
   as a flow $\hat P_t=\chro_0^t Y_\tau d\tau$, generated by a non autonomous vector field $Y_t$,
   which is continuous in $t$; one can  take $Y_t(x)=(P_t)^{-1}_* \frac{dP_t}{dt}(x)$.

Denote by $K_t, \ t \in [0,T]$  the images of a compact set $K$
under the flow  $\hat P_t$.
As far as   for each  $ t \in [0,T]$
 condition \eqref{eq_lie_strong}   holds   for  the vector fields  $Y_t$   and   control system \eqref{eq_cafs},  then
 one can apply  Theorem 4.3 of \cite{AS20}
  to the vector field $Y_t$,  the diffeotopy $K_t, \ t \in [0,T]$ and    system \eqref{eq_cafs}.
 According to this  Theorem  for each $\varepsilon >0$  there exists a   control   $u(t)=(u_1(t), \ldots , u_r(t)),  \ t \in [0,T]$    such that for the flow \eqref{eq_PTuv}
      \[  \sup_{x \in K}\left\|\hat P(x) -P_T(x) \right\|_{0,K}  < \eps . \ \hspace{4cm } \qed  \]

The  approximation result,
we have just proved,
 can be extended from diffeomorphisms of $\mathcal{M}$ to a broader class of
 continuous maps    $\varphi:\mathcal{M} \to \mathcal{C}$.


One of  possible constructions can be realized  on the  manifold $\mathcal{M} \times \mathcal{C}$.  Consider
 the projection $p: \mathcal{M} \times \mathcal{C} \to \mathcal{C}$ and a diffeomorphic  immersion  $\imath : \mathcal{M} \to \mathcal{M} \times \mathcal{C} $. We opt for  $\imath (x)=(x,\nu), \ \forall x \in \mathcal{M}$, where $\nu$ is a selected point of $\mathcal{C} $. Let the metric $d$ on $\mathcal{M} \times \mathcal{C}$ be defined by $d=d_{\mathcal{M}}+d_{\mathcal{C}}$.

Let  $\varphi:\mathcal{M} \to \mathcal{C}$ be a continuous mapping which
is  approximately $C^1$-smoothly homotopic to  the constant mapping  $\varphi_0(x)=\nu$. This
means that in any $C^0$-neighborhood of  $\varphi$  there are $C^1$-smooth functions $\hat \varphi$, which
 are contractible to the constant function by
$C^1$-smooth homotopies $\hat \varphi_t(x), \ t \in [0,1]$:
$$ \hat \varphi_0(x) \equiv \nu, \ \hat \varphi_1(x)=\hat \varphi (x).$$

Without loss of generality we  can  limit ourselves  to the case in which $\varphi=\hat \varphi$  is $C^1$-smooth
and $C^1$-smoothly homotopic to the constant function.
Consider the  graphs of the mappings $\varphi_t(x): \ \Gamma_t = \{(x,\varphi_t(x)),  \ x \in \mathcal{M}\} \subset \mathcal{M}\times \mathcal{C}$.
For each $t$ the sets $\Gamma_t$ are  diffeomorphic to $\Gamma_0$ and to $\mathcal{M}$.
The flow
$\hat P_t$, generated on the manifold $\mathcal{M}\times \mathcal{C}$  by the vector field
$\frac{\partial \varphi_t(x)}{\partial t}\frac{\partial}{\partial c}$,
defines the diffeotopy of the graphs: $$\Gamma_t=\hat P_t(\Gamma_0), \ t \in [0,1]; \ P_1(x,\nu)=(x, \varphi(x)), \ \forall x \in
\mathcal{M}.$$


Let  control system \eqref{eq_cafs},  defined  now on   $\mathcal{M} \times \mathcal{C}$,
possess the Lie algebra strong approximating property.
By the previous theorem for each  compact $K \subset \mathcal{M}$ and each  $\varepsilon >0$ there exists a control $u(\cdot)=(u_1(\cdot), \ldots , u_r(\cdot))$,
such that for the flow
\begin{equation}\label{eq_MC}
  P_t= \chro_0^t \left(\sum_{j=1}^r  f_j(x)u_j(\tau)\right)d\tau, \ x \in \mathcal{M} \times \mathcal{C}
\end{equation}
  there holds
$\|P_1- \hat P_1\|_{0,K \times \{\nu\}}<\varepsilon$.
Then
\[\forall x \in K: \ \varepsilon >d_\mathcal{M}(p \circ P_1(x,\nu), p \circ \hat P_1(x,\nu))=d_\mathcal{M}(p \circ P_1(x,\nu),\varphi(x))  \]
and we conclude with the corollary.

 \begin{cor}
 \label{thm_maps}
Let  control system \eqref{eq_cafs},    defined on   $\mathcal{M} \times \mathcal{C}$,
meet  Assumptions \ref{asmp_b} and \ref{asmp_s}.
%
%
Then the system  is $C^0$-approximately controllable on the manifold of mappings:
for each continuous
mapping  $\varphi:\mathcal{M} \to \mathcal{C}$,  which is approxi\-mately smoothly homotopic to a constant,
each $\varepsilon >0$  and each compact $K \subset \mathcal{M}$
there    exists        $u(t)= (u_1(t), \ldots , u_r(t)),  \ t \in [0,T]$, such that for  the  corresponding flow
\eqref{eq_MC} on $\mathcal{M} \times \mathcal{C}$  there holds
   $\left\|\varphi(x) -p \circ  P_T \circ \imath (x) \right\|_{0,K} < \eps .    $
 \end{cor}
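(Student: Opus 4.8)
The plan is to realize the target mapping $\varphi$ as the ``shadow'' on $\mathcal{C}$ of a diffeomorphism of the product $\mathcal{M}\times\mathcal{C}$ that lies in $\mbox{Diff}^c_0$, and then to invoke Theorem \ref{thm_grodif} to approximate that diffeomorphism by a flow of the control system. First I would reduce to the smooth case. Since $\varphi$ is only assumed to be approximately smoothly homotopic to the constant $\nu$, I pick a $C^1$-smooth map $\hat\varphi$ with $\|\varphi-\hat\varphi\|_{0,K}<\eps/2$ admitting a $C^1$ homotopy $\varphi_t$, $t\in[0,1]$, with $\varphi_0\equiv\nu$ and $\varphi_1=\hat\varphi$. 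It then suffices to $\eps/2$-approximate $\hat\varphi$, and I rename $\hat\varphi$ as $\varphi$.

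Second, I would build the target diffeomorphism on $\mathcal{M}\times\mathcal{C}$. The homotopy $\varphi_t$ furnishes the vertical time-dependent vector field $\frac{\partial\varphi_t(x)}{\partial t}\frac{\partial}{\partial c}$, whose flow $\hat P_t$ freezes the $\mathcal{M}$-coordinate and drags the $\mathcal{C}$-coordinate along $\varphi_t$; thus $\hat P_t(\Gamma_0)=\Gamma_t$ and, in particular, $\hat P_1\circ\imath(x)=(x,\varphi(x))$, so that $p\circ\hat P_1\circ\imath=\varphi$ on $\mathcal{M}$. To make this diffeomorphism eligible for Theorem \ref{thm_grodif} I would multiply the vector field by a smooth bump equal to $1$ on a neighborhood of the compact set $\{(x,\varphi_t(x)):x\in K,\ t\in[0,1]\}$ and supported in a compact set. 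Because the field is vertical, every trajectory issuing from $(x,\nu)$ with $x\in K$ stays inside the region where the bump equals $1$, so its value at time $1$ is unchanged, while the modified flow now lies in $\mbox{Diff}^c_0(\mathcal{M}\times\mathcal{C})$: compactly supported and connected to the identity through the diffeotopy itself.

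Third, I would apply Theorem \ref{thm_grodif} on $\mathcal{M}\times\mathcal{C}$. By hypothesis the control system there meets Assumptions \ref{asmp_b} and \ref{asmp_s}, so for the compact $K\times\{\nu\}$ and tolerance $\eps/2$ there is a control $u(\cdot)$ whose flow $P_T$ of the form \eqref{eq_MC} satisfies $\|P_T-\hat P_1\|_{0,K\times\{\nu\}}<\eps/2$. Finally I would project: since $d=d_{\mathcal{M}}+d_{\mathcal{C}}$, the projection $p$ is distance non-increasing, whence for every $x\in K$
\[
 d_{\mathcal{C}}\big(p\circ P_T\circ\imath(x),\,\varphi(x)\big)
 = d_{\mathcal{C}}\big(p\circ P_T\circ\imath(x),\,p\circ\hat P_1\circ\imath(x)\big)
 \le d\big(P_T\circ\imath(x),\,\hat P_1\circ\imath(x)\big) < \eps/2,
\]
which, combined with the reduction step, yields $\|\varphi-p\circ P_T\circ\imath\|_{0,K}<\eps$.

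The genuinely substantive input is Theorem \ref{thm_grodif}, which is already available; given it, the only real care lies in the second step. The homotopy-to-a-constant hypothesis is essential rather than technical: Theorem \ref{thm_grodif} approximates elements of the identity component $\mbox{Diff}^c_0$, so the target must be connected to the identity, which is exactly what contractibility of $\varphi$ provides. The delicate point I expect to argue carefully is that the cutoff enforcing compact support does not disturb the action on $K\times\{\nu\}$; this hinges precisely on the \emph{verticality} of the generating field (the base coordinate is frozen), so the relevant trajectories never leave the region where the cutoff is trivial.
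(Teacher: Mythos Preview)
Your proposal is correct and follows essentially the same route as the paper: reduce to the smooth homotopic case, generate the target diffeotopy on $\mathcal{M}\times\mathcal{C}$ by the vertical field $\frac{\partial\varphi_t}{\partial t}\,\frac{\partial}{\partial c}$ so that $\hat P_1\circ\imath(x)=(x,\varphi(x))$, invoke Theorem~\ref{thm_grodif} on the compact $K\times\{\nu\}$, and project via $p$. Your cutoff argument securing $\hat P_1\in\mbox{Diff}^c_0(\mathcal{M}\times\mathcal{C})$ and your observation that verticality keeps the relevant trajectories inside the region where the bump equals $1$ make explicit a point the paper leaves tacit; otherwise the arguments coincide.
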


\section{Ensemble  controllable systems on Euclidean spaces $\mathbb{R}^d$,  tori $\mathbb{T}^d$ and the  $2$-dimensional sphere $\mathbb{S}$}
\label{sec_fin_ens}

In this Section we consider several manifolds, such as
 Euclidean spaces $\mathbb{R}^d$, $d$-dimensional tori $\mathbb{T}^d$ and $2$-dimensional sphere $\mathbb{S}$.
We provide examples of  control systems on the manifolds,
which    possess
controllability properties for finite ensembles and properties of approximate controllability
in the group of  diffeomorphisms of the manifolds.

{\it For the sake of brevity along the  Section we will call system ensemble controllable if  the conclusions of
Theorems \ref{dethm} and  \ref{thm_grodif} hold  for it.}

The key point of the proofs  is the verification of the Lie algebra strong approximating condition. Such a verification regards two moments.  First  we have to establish kind of "Lie rank condition" - the approximability of the vector fields by the vector fields from $\mbox{Lie}\{f_1, \ldots , f_r\}$.    The second issue is the
 regularity of  these  approximations, including boundedness of the derivatives of the approximants.



\subsection{Ensemble controllable system  in ${\mathbb  R}^d$}

Consider control-linear  system in $\mathbb{R}^d$:
\begin{equation}\label{eq_univ_ens}
  \dot{z}=\sum_{i=1}^{d}f_i(z)u_i + \sum_{i=1}^{d}g_i(z)v_i, \ z \in \mathbb{R}^d ,
\end{equation}
where
\begin{equation}\label{eq_vecfields}
  f_i(z)=e^{-\gamma(z)}\frac{\partial}{\partial z_i}, \ g_i=\frac{\partial}{\partial z_i}, \ i=1, \ldots , d,
\end{equation}
and
 \[ \gamma(z)=\frac{\langle z, z \rangle}{2}=\frac{z_1^2+\cdots +z_n^2}{2}.\]
Putting  $z=(z_1, \ldots , z_d)$,  $u=(u_1, \ldots , u_d)$,   $v=(v_1, \ldots , v_d) $
we represent equations  \eqref{eq_univ_ens}-\eqref{eq_vecfields}  in a vectorial form
 \begin{equation}\label{eq_consys}
 \dot{z}=e^{-\gamma(z)}u+v, \ z,u,v \in \mathbb{R}^d.
 \end{equation}
We call it GH-system as far as Gaussian  density function  $e^{-\gamma(z)}$ and Hermite polynomials  play important role in its study.
%

 We consider
 the       action of  system
\eqref{eq_consys} onto an    ensemble of  points \linebreak  $(x^{1}, \ldots , x^N) \in \left(\mathbb{R}^d\right)^N$.
To establish  the property of ensemble controllability we verify the Lie algebra strong approximation condition for GH system.
\begin{prop}
\label{thm_sacGH}
Vector fields \eqref{eq_vecfields} meet Assumptions  \ref{asmp_b} and \ref{asmp_s}.
\end{prop}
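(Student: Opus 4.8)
The plan is to check the two hypotheses separately. Assumption \ref{asmp_b} is immediate, so the substance is the verification of Assumption \ref{asmp_s}, which I would split into an algebraic step — identifying which vector fields lie in $\mbox{Lie}\{f_1,\dots,f_d,g_1,\dots,g_d\}$ — and an analytic step, producing approximants whose $C^1$ norm stays uniformly bounded. For the boundedness, the fields $g_i=\partial/\partial z_i$ have constant coefficients and so are trivially bounded together with all their derivatives; for $f_i=e^{-\gamma}\partial/\partial z_i$ with $\gamma=|z|^2/2$ one observes that every derivative of the coefficient $e^{-|z|^2/2}$ is a polynomial multiple of the Gaussian, and such products vanish at infinity, hence are bounded on $\mathbb{R}^d$.

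The algebraic step rests on the identity $[\partial_i,\,h\,\partial_j]=(\partial_i h)\,\partial_j$, which says that $\ad_{g_i}$ merely differentiates the coefficient of a field pointing along $\partial_j:=\partial/\partial z_j$. Iterating this on $f_j=e^{-\gamma}\partial_j$ produces $(\partial^\alpha e^{-\gamma})\,\partial_j$ for every multi-index $\alpha$, and by the Rodrigues formula
\[ \partial^\alpha e^{-\gamma}=(-1)^{|\alpha|}\Big(\prod_{i} H_{\alpha_i}(z_i)\Big)e^{-\gamma}, \]
where $H_n$ is the $n$-th (probabilists') Hermite polynomial. Since the tensor products of Hermite polynomials form a basis of the space of all polynomials, I conclude that $\mbox{Lie}\{f_1,\dots,f_d,g_1,\dots,g_d\}$ contains every field $P(z)\,e^{-\gamma}\partial_j$ with $P$ an arbitrary polynomial and $j$ arbitrary.

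For the analytic step, fix a $C^1$ field $Y=\sum_j Y_j\,\partial_j$ and a compact $K$. The decisive idea is to approximate in $C^1$ rather than merely in $C^0$: a $C^1$ approximation simultaneously delivers the $C^0$ bound demanded in \eqref{eq_lie_strong} and keeps the $C^1$ norms bounded. Since $\gamma$ is bounded on $K$, the weight $e^{-\gamma}$ is bounded below there, so each $Y_j e^{\gamma}$ is again $C^1$ on $K$; I would approximate it in the $C^1(K)$ norm by a polynomial $P_j$ (multivariate $C^1$-Weierstrass approximation). Then $X=\sum_j P_j\,e^{-\gamma}\partial_j$ lies in the Lie algebra, and because multiplication by the smooth weight $e^{-\gamma}$ is continuous in the $C^1(K)$ topology, $X$ approximates $Y$ in $C^1(K)$. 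Choosing $\ell=\|Y\|_{1,K}+1$, every sufficiently accurate approximant satisfies $\|X\|_{1,K}<\ell$ while $\|Y-X\|_{0,K}\le\|Y-X\|_{1,K}\to 0$, so the infimum in \eqref{eq_lie_strong} vanishes, and the property holds with $m=1$.

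I expect the main obstacle to be precisely this regularity requirement, and not the Lie-algebraic spanning. A crude $C^0$-only approximation is easy but useless, since the first derivatives of high-degree approximants could blow up and destroy the uniform bound $\|X\|_{1,K}<\ell$. What makes the stronger $C^1$ approximation feasible here is that the Gaussian weight is bounded away from zero on the compact $K$, so passing between $Y_j$ and $Y_j e^{\gamma}$ is a $C^1$-bounded operation in both directions, and polynomials are $C^1$-dense; this is exactly the interplay of the Gaussian density $e^{-\gamma}$ and the Hermite polynomials that motivates the name GH-system.
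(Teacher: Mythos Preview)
Your proposal is correct, and the overall architecture — boundedness via polynomial-times-Gaussian decay, the algebraic identification of $\mbox{Lie}\{f_i,g_i\}$ with $\{P(z)e^{-\gamma}\partial_j : P \text{ polynomial}\}$ via iterated brackets and the Rodrigues formula — matches the paper exactly. The two arguments diverge only at the analytic step.

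The paper takes a global route: it expands $Y_j e^{\gamma}$ in a Fourier--Hermite series and uses the partial sums $S_n$ as approximants. This forces a technical lemma (Lemma~\ref{thm_Herm_conv}, proved in the Appendix) on uniform convergence of multivariate Hermite series together with their first derivatives, and in turn requires $Y$ to be $C^{[d/2]+2}$-smooth, i.e.\ $m=[d/2]+2$. Your approach bypasses all of this by working locally: since the assumption only demands approximation on a given compact $K$, and $e^{\pm\gamma}$ are bounded with bounded derivatives on $K$, you can simply invoke $C^1$-Weierstrass approximation of $Y_j e^{\gamma}$ on $K$. This is shorter, needs no external convergence result, and yields the sharper value $m=1$. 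The price is that your approximants are not canonical (no explicit sequence like the Hermite partial sums), but for the purpose of verifying Assumption~\ref{asmp_s} that costs nothing.
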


   \begin{proof}

  Direct computation of the iterated Lie brackets of  vector fields \eqref{eq_vecfields} gives
    \[\ad^{m_{j1}}_{g_1} \cdots \ad^{m_{jd}}_{g_d} f_j(z) = \frac{\partial^{m_j}e^{-\gamma(z)}}{\partial z_1^{m_{j1}}\ldots \partial z_d^{m_{jd}}} \frac{\partial}{\partial z_j}, \ m_j=m_{j1}+ \cdots + m_{jd} .     \]
%
 As one  knows
    \begin{equation}\label{eq_hermite}
      \frac{\partial^{m_j}e^{-\gamma(z)}}{\partial z_1^{m_{j1}}\ldots \partial z_d^{m_{jd}}}=(-1)^{m_j}H_{m_{j1}, \ldots , m_{jd}}(z)e^{-\gamma(z)}, \ z=(z_1, \ldots , z_d),
    \end{equation}
     %
     where $H_{m_{j1}, \ldots , m_{jd}}(z_1, \ldots , z_d)$ are multivariate Hermite polynomials.
     Thus  for each $j=1, \ldots , d$ and each  Hermite polynomial $H_{m_{j1}, \ldots , m_{jd}}(z)$ the vector field
     $H_{m_{j1}, \ldots , m_{jd}}(z)e^{-\gamma(z)}\frac{\partial}{\partial z_j}$ belongs to the Lie algebra
     generated by  vector fields \eqref{eq_vecfields}.

Hermite  polynomials  $\{H_{m_1,\ldots ,m_d}(z_1, \ldots , z_d)| \ m_1 \geq 0 ,  \ldots ,  m_d \geq 0\}$
 form  a complete orthogonal system in $L_2(\mathbb{R}^d)$
 with respect to the weighted scalar product
    \[ \langle f , g \rangle= \frac{1}{(2\pi)^{d/2}} \int_{\mathbb{R}^d}f(z)g(z)e^{-\gamma (z)}dx .   \]
Any function from $L_2(\mathbb{R}^d)$ can be expanded into a $L_2$-convergent series in   Hermite polynomials.
To verify the  Lie algebra strong approximating condition one has
to prove that for each sufficiently smooth
vector field  $Y(X)=\sum_{j=1}^{d}Y_j(z)\frac{\partial}{\partial z_j}$   with compact support in ${\mathbb R}^d$,
there exists   $\ell >0$ such that  for each $j=1, \ldots ,d$ and each $\eps >0$  one can find
 a linear combination $X_j$ of the functions     \eqref{eq_hermite} for which
    \[ \|X_j\|_{1,K} \leq \ell , \ \|X_j - Y_j\|_{0,K} \leq \eps .\]

 Suppose $Y(x)$ to be
$C^{[\frac{d}{2}]+2}$-smooth. Pick  its   component $Y_j(x)$  and consider
 the  orthogonal expansion of the function $Y_j(z)e^{\gamma (z)}$ in Hermite polynomials:
     \begin{equation}\label{eq_herm_ser}
      Y_j(z) e^{\gamma (z)} \sim  \sum_m c_m H_m(z), \ m=(m_1, \ldots , m_d) \in \mathbb{N}^d.
     \end{equation}
   For  $|m|=m_1+ \cdots +m_d$
   let  $S_n(z)=\sum_{m: \ |m| \leq  n} c_m  H_m$ be a partial sum of this expansion.
   Lie algebra strong approximating condition   is implied by  the following Lemma.
    \begin{lem}
    \label{thm_Herm_conv}
          For $Y_j(z)$ being $C^{\left[\frac{d}{2}\right]+2}$-smooth the functions $S_n(z)e^{-\gamma(z)}$ converge uniformly to $Y_j(z)$, as $n \to \infty$,      while  $\frac{\partial}{\partial z_i}\left(S_n(z)e^{-\gamma(z)}\right)$ converge uniformly to $\frac{\partial Y_j(z)}{\partial z_i}$  and hence are bounded by a constant $\ell$ independent of $n$.
    \end{lem}
Proof of the lemma can be found in the Appendix.
%
\end{proof}
By virtue  of  Theorems \ref{dethm} and  \ref{thm_grodif}  and Proposition \ref{thm_sacGH} there holds

\begin{thm}[ensemble controllability of GH system]
\label{thm_GH}

i) For $d>1$ system   \eqref{eq_consys} is ensemble controllable on $\mathcal{M}=\mathbb{R}^d$;

ii) For $\mathcal{M}=\mathbb{R}$  system   \eqref{eq_consys} is approximately controllable in the group of diffeomorphisms
 $\mbox{Diff}^c_0(\mathbb{R})$;

  iii)  For $\mathcal{M}=\mathbb{R}$  system   \eqref{eq_consys} can transform a finite ensemble $\left(x^1_\alpha , \cdots , x^N_\alpha\right)$ into
 another ensemble $\left(x^1_\omega , \cdots , x^N_\omega\right)$  if and only if they are equally ordered: $x^i_\alpha < x^j_\alpha \Leftrightarrow x^i_\omega < x^j_\omega ,\ \forall i,j$.
%
%
\end{thm}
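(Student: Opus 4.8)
The plan is to obtain parts (i) and (ii) as immediate consequences of the general results established above, reserving the real effort for part (iii), where the one-dimensionality of $\mathbb{R}$ forces the order constraint. Proposition \ref{thm_sacGH} guarantees that the fields \eqref{eq_vecfields} satisfy Assumptions \ref{asmp_b} and \ref{asmp_s} for every $d\geq 1$; moreover the strong approximating property of Assumption \ref{asmp_s} trivially implies the plain approximating property of Assumption \ref{asmp_a}, since the infimum over the smaller family $\mbox{Lie}^\ell_{1,K}\{f_1,\ldots,f_r\}$ already vanishes. For (i), with $d>1$, Theorem \ref{dethm} then gives exact finite ensemble controllability while Theorem \ref{thm_grodif} gives $C^0$-approximate controllability in $\mbox{Diff}^c_0(\mathbb{R}^d)$; jointly these are what the Section's convention calls ensemble controllability. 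For (ii) the hypothesis $\dim\mathcal{M}>1$ of Theorem \ref{dethm} is lost, but Theorem \ref{thm_grodif} carries no such restriction, so approximate controllability in $\mbox{Diff}^c_0(\mathbb{R})$ follows directly from Proposition \ref{thm_sacGH}.

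For the necessity (the ``only if'') in (iii) I would fix a control and observe that each trajectory solves the scalar nonautonomous equation $\dot z=e^{-\gamma(z)}u(t)+v(t)$, whose right-hand side is, by Assumption \ref{asmp_b}, Lipschitz in $z$ with a locally integrable Lipschitz constant. Carath\'eodory uniqueness then forbids two trajectories issuing from distinct points from ever crossing, so $x^i_\alpha<x^j_\alpha$ implies $z_i(t)<z_j(t)$ for all $t$, and in particular $x^i_\omega<x^j_\omega$. Hence the flow preserves the ordering and a reachable target must be equally ordered.

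For the sufficiency (the ``if'') I would use that $\mathbb{R}^{(N)}$ decomposes into exactly $N!$ connected components, one per ordering, so two ensembles are equally ordered precisely when they lie in the same component. The evaluation-map argument in the proof of Theorem \ref{dethm} --- that $E_\gamma(Y)=(Y(x^1),\ldots,Y(x^N))$ is a continuous linear surjection of the $C^m$ fields onto $T_\gamma\mathbb{R}^{(N)}$ whose restriction to $\mbox{Lie}\{f_1,g_1\}$ has dense, hence full, image --- does not use $\dim\mathcal{M}>1$, and therefore shows that the $N$-folds are bracket generating at every $\gamma\in\mathbb{R}^{(N)}$. The dimension hypothesis entered Proposition \ref{thm_brag} only to make $\mathcal{M}^{(N)}$ connected; here I would instead apply the Rashevsky--Chow theorem (\cite{ASach}) to the driftless, bracket-generating system restricted to a single connected component, obtaining controllability within each ordering, after which the time-rescaling Remark upgrades controllability in some time to controllability in the prescribed time $T$.

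The main obstacle is the sufficiency in (iii): one must recognize that, although Theorem \ref{dethm} is stated under $\dim\mathcal{M}>1$, its bracket-generating conclusion survives verbatim in dimension one, and one must correctly identify the connected components of $\mathbb{R}^{(N)}$ with the orderings so that Rashevsky--Chow can be invoked componentwise rather than globally. Some care is also needed for completeness of the flows on the noncompact component, which is supplied by the boundedness Assumption \ref{asmp_b}.
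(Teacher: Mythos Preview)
Your proposal is correct and aligns with the paper's approach. In fact the paper offers no proof beyond the single line ``By virtue of Theorems \ref{dethm} and \ref{thm_grodif} and Proposition \ref{thm_sacGH} there holds'' and then states the theorem; your treatment of (i) and (ii) matches this exactly, while for (iii) you have correctly supplied the argument the paper leaves implicit --- namely, that the evaluation-map step in the proof of Theorem \ref{dethm} is dimension-independent, so the $N$-folds remain bracket generating on $\mathbb{R}^{(N)}$, and Rashevsky--Chow then yields controllability on each of the $N!$ order-components separately, the ``only if'' direction following from uniqueness of ODE trajectories.
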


\subsection{Ensemble controllability on the tori $\mathbb{T}^d$}

We start with $d=1$.
Consider   the control-linear  system on  ${\mathbb T}^1$:
\begin{equation}\label{eq_csT1}
 \dot{\varphi}=u_0+u_1 \sin \varphi + u_2 \sin 2\varphi  ,
\end{equation}
generated by
the  vector fields
\begin{equation}\label{eq_t1vecfields}
 f_0(\varphi)= \frac{ \partial}{ \partial \varphi}, \  f_1(\varphi)=\sin \varphi \frac{ \partial}{ \partial \varphi}, \ f_2(\varphi)=\sin 2\varphi \frac{ \partial}{ \partial \varphi}.
\end{equation}
 Here $\varphi$ is the angle coordinate on ${\mathbb T}^1$.

The action of  system  \eqref{eq_csT1} on  an ensemble of $N$  points $(\varphi^1_\alpha, \ldots , \varphi^N_\alpha)$
on ${\mathbb T}^1$ is  defined  by the equations
\begin{eqnarray*}
  \dot{\varphi}^j=u_0(t)+u_1(t)  \sin \varphi^j + u_2(t)  \sin 2\varphi^j, \  j=1, \ldots , N, \\
  \varphi^j(0)=\varphi^j_\alpha
  \end{eqnarray*}

\begin{lem}
\label{thm_sact1}
Vector fields \eqref{eq_t1vecfields}
 meet Assumptions  \ref{asmp_b} and \ref{asmp_s}.
 \end{lem}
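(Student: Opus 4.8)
The vector fields \eqref{eq_t1vecfields} are trigonometric and therefore, together with all their derivatives, bounded on the compact manifold $\mathbb{T}^1$; so Assumption \ref{asmp_b} holds trivially and the whole content of the lemma lies in verifying the Lie algebra strong approximating property (Assumption \ref{asmp_s}). As in the GH case (Proposition \ref{thm_sacGH}), the plan is to split the verification into an \emph{algebraic} step, which identifies a dense part of $\mbox{Lie}\{f_0,f_1,f_2\}$, and an \emph{analytic} step, which controls the regularity of the approximants.

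For the algebraic step I would use that for $a(\varphi)\partial_\varphi,\,b(\varphi)\partial_\varphi \in \mbox{Vect}(\mathbb{T}^1)$ one has $[a\partial_\varphi,b\partial_\varphi]=(ab'-ba')\partial_\varphi$. Writing $c_n=\cos(n\varphi)\partial_\varphi$ and $s_n=\sin(n\varphi)\partial_\varphi$, the generators are $c_0=f_0$, $s_1=f_1$, $s_2=f_2$, and a direct computation gives $[f_0,f_1]=c_1$ and $[f_0,f_2]=2c_2$, so all harmonics of order $\le 2$ lie in $\mathfrak g:=\mbox{Lie}\{f_0,f_1,f_2\}$. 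The idea is then to climb by induction on the harmonic order: the product-to-sum identity yields $[s_1,s_m]=\tfrac{m-1}{2}\,s_{m+1}-\tfrac{m+1}{2}\,s_{m-1}$ for $m\ge 2$, whose coefficient $\tfrac{m-1}{2}$ is nonzero, so one solves for $s_{m+1}$ once $s_{m-1}\in\mathfrak g$ is known; applying the differentiation $[f_0,\cdot]$ then recovers $c_{m+1}=\tfrac{1}{m+1}[f_0,s_{m+1}]$ up to sign. Hence $c_n,s_n\in\mathfrak g$ for every $n$, i.e. $\mathfrak g$ contains every field $p(\varphi)\partial_\varphi$ with $p$ a trigonometric polynomial.

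The analytic step, which I expect to be the real obstacle, is to realize these approximations with a \emph{uniform} $C^1$ bound, as demanded by \eqref{eq_lie_strong}; this is precisely what distinguishes the strong property from the plain one. Given a $C^m$-smooth field $Y=b(\varphi)\partial_\varphi$, I would take its Fourier partial sums $S_n(\varphi)\partial_\varphi$, which are trigonometric-polynomial fields and hence belong to $\mathfrak g$ by the previous step. The decay estimate $|\hat b(k)|\le \|b^{(m)}\|_{0}\,|k|^{-m}$ shows that already for $m=3$ both $S_n\to b$ and $S_n'\to b'$ uniformly on $\mathbb{T}^1$, since then $\sum_k |k|\,|\hat b(k)|<\infty$; consequently $\|S_n\partial_\varphi\|_{1}$ is bounded by a constant $\ell$ independent of $n$ while $\|S_n\partial_\varphi-Y\|_{0}\to 0$. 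This places the $S_n\partial_\varphi$ in $\mbox{Lie}^\ell_{1,\mathbb{T}^1}\{f_0,f_1,f_2\}$ and makes the infimum in \eqref{eq_lie_strong} vanish, which is Assumption \ref{asmp_s}. Compared with the Hermite estimate of Lemma \ref{thm_Herm_conv}, the regularity control here is much simpler, being a classical fact about Fourier series on the compact circle; the only point needing care is matching the smoothness order $m$ to the summability of $\sum_k |k|\,|\hat b(k)|$, which forces $m\ge 3$.
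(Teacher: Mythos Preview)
Your proposal is correct and follows essentially the same route as the paper: both establish by the same Lie-bracket induction (using $[s_1,s_m]$ to climb in the sine harmonics and $[f_0,\cdot]$ to recover the cosines) that every trigonometric-polynomial field lies in $\mathfrak g$, and then invoke uniform convergence of the Fourier partial sums together with their first derivatives for the analytic step. The only difference is quantitative: the paper asserts that $C^2$ already suffices for $S_n'\to b'$ uniformly (via the classical fact that the Fourier series of a $C^1$ function converges uniformly, applied to $b'$), whereas you take $m=3$ through the absolute-summability estimate $\sum_k|k|\,|\hat b(k)|<\infty$; either value of $m$ is fine, since Assumption~\ref{asmp_s} only requires that \emph{some} $m$ work.
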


\begin{proof}
Boundedness  is obvious.

We prove that
  the Lie algebra $\mbox{Lie}\{f_0,f_1,f_2\} $, generated by  vector fields  \eqref{eq_t1vecfields},  contains the vector fields
  $\sin k\varphi \frac{\partial}{\partial \varphi}, \ \cos k\varphi \frac{\partial}{\partial \varphi}, k=1, 2 , \ldots $.

  As far as $\left[ \frac{\partial}{\partial \varphi},  \sin k\varphi \frac{\partial}{\partial \varphi} \right]=k \cos k\varphi \frac{\partial}{\partial \varphi}$,  it suffices to prove  that $\sin k\varphi \frac{\partial}{\partial \varphi}$,   $k \geq 1$ are contained in $\mbox{Lie}\{f_0,f_1,f_2\} $.
  This can be  done  by induction in $k$,  given that for $k>1$
  \[\left[ \sin \varphi \frac{\partial}{\partial \varphi},  \sin k\varphi \frac{\partial}{\partial \varphi} \right]=(k-1) \sin ((k+1)\varphi)- (k+1)  \sin ((k-1)\varphi) .      \]

Consider a
 vector field $Y(\varphi)\frac{\partial}{\partial \varphi}$ on $\mathbb{T}^1$ together with  its  Fourier  expansion
 \[Y(\varphi)\frac{\partial}{\partial \varphi} \sim \frac{a_0}{2}\frac{\partial}{\partial \varphi} + \sum_{k=1}^{\infty}\left(a_k \cos k\varphi \frac{\partial}{\partial \varphi}+b_k \sin k\varphi \frac{\partial}{\partial \varphi}\right).
     \]
By the aforesaid partial sums of the series belong to
$\mbox{Lie}\{f_0,f_1,f_2\} $.
For $Y(\varphi)$ being  $C^2$-smooth  the partial sums $S_n(\varphi)$ of the Fourier series converge uniformly to $Y(\varphi)$,  as $n \to \infty$.  The derivatives $S'_n(\varphi)$ converge uniformly to  $Y'(\varphi)$ and hence are equibounded, wherefrom the Lie algebra strong approximating condition follows.
\end{proof}

To extend the construction to the $d$-dimensional torus
 $\mathbb{T}^d=\mathbb{T}^1 \times \cdots \times \mathbb{T}^1$
we introduce   the coordinates   $\varphi_1 , \ldots , \varphi_d$  in $\mathbb{T}^d$ and define the vector fields
\begin{eqnarray}\label{eq_Tdvecfields}
  f^0_i=\frac{\partial}{\partial \varphi_i}, \    f^1_i=\sin \varphi_i \frac{\partial}{\partial \varphi_i}, \  f^2_i=
  \sin 2\varphi_i \frac{\partial}{\partial \varphi_i},   \ i=1, \ldots , d;  \\
  g_i=\left( \sum_{j=1}^d \sin \varphi_j  \right) \frac{\partial}{\partial \varphi_i}, \     i=1, \ldots , d.   \nonumber
\end{eqnarray}
Consider the  control-linear  system
\begin{equation}\label{eq_ensTd}
  \dot{\varphi}_k=u_{0k}+\sin \varphi_k u_{1k}+\sin 2\varphi_k u_{2k} +\left(\sum_{j=1}^{d} \sin \varphi_j \right)v_k, \ k=1, \ldots d.
\end{equation}

\begin{lem}
  \label{thm_sintor}
  The Lie algebra $\mathcal{L}_{\mathbb{T}^d}$
  generated by  vector fields \eqref{eq_Tdvecfields} contains all the monomial vector fields of the form
  \begin{equation}\label{eq_monomial}
    \left(\prod_{i \in \mathcal{I}}             \cos k_i \varphi_i    \prod_{i \in \mathcal{I}^c}             \sin  k_i \varphi_i
                     \right)\frac{\partial}{\partial \varphi_j}, \    j=1, \ldots , d
  \end{equation}
    where $\mathcal{I} \cup  \mathcal{I}^c =\{1, \ldots , d\}, \ \mathcal{I} \cap  \mathcal{I}^c=\emptyset$.
  \end{lem}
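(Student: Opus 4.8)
The plan is to generate every monomial vector field \eqref{eq_monomial} from the generators \eqref{eq_Tdvecfields} by repeatedly applying the elementary identity
\[
\left[a\,\frac{\partial}{\partial\varphi_p},\, b\,\frac{\partial}{\partial\varphi_q}\right]
= a\,\frac{\partial b}{\partial\varphi_p}\,\frac{\partial}{\partial\varphi_q}
- b\,\frac{\partial a}{\partial\varphi_q}\,\frac{\partial}{\partial\varphi_p},
\]
building up the coefficients in order of increasing complexity. Since only membership in $\mathcal{L}_{\mathbb{T}^d}$ matters, signs and scalar factors may be ignored throughout. I would organize the argument in three stages: first the diagonal single--variable fields, then the off--diagonal single--variable fields, and finally the products.

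\emph{Diagonal fields.} For a fixed $i$ the three fields $f^0_i,f^1_i,f^2_i$ involve only $\varphi_i$ and $\partial/\partial\varphi_i$, and their iterated brackets stay of this form and coincide with the one--dimensional computation already carried out in the proof of Lemma \ref{thm_sact1}. Hence $\mathcal{L}_{\mathbb{T}^d}$ contains $\cos m\varphi_i\,\frac{\partial}{\partial\varphi_i}$ and $\sin m\varphi_i\,\frac{\partial}{\partial\varphi_i}$ for all $m\ge1$ and all $i$.

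\emph{Off--diagonal fields.} Here the coupling fields $g_i$ are indispensable, and this is the heart of the proof. Subtracting the diagonal part gives $g_j-f^1_j=\Big(\sum_{m\neq j}\sin\varphi_m\Big)\frac{\partial}{\partial\varphi_j}\in\mathcal{L}_{\mathbb{T}^d}$, and bracketing this with $f^0_k=\frac{\partial}{\partial\varphi_k}$ for $k\neq j$ annihilates every summand except the one in $\varphi_k$, yielding $\cos\varphi_k\,\frac{\partial}{\partial\varphi_j}$; a further bracket with $f^0_k$ produces $\sin\varphi_k\,\frac{\partial}{\partial\varphi_j}$. To reach arbitrary frequencies I would observe that $V=\{c:\ c(\varphi_k)\frac{\partial}{\partial\varphi_j}\in\mathcal{L}_{\mathbb{T}^d}\}$ is a vector space closed under the operation $a\mapsto b\,a'$ for any single--variable $b(\varphi_k)$ (bracket with $b(\varphi_k)\frac{\partial}{\partial\varphi_k}$ from the previous stage); a short induction using product--to--sum formulas then shows $V$ contains all trigonometric polynomials in $\varphi_k$. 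Thus $\cos m\varphi_k\,\frac{\partial}{\partial\varphi_j}$ and $\sin m\varphi_k\,\frac{\partial}{\partial\varphi_j}$ lie in $\mathcal{L}_{\mathbb{T}^d}$ for all $k\neq j$.

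\emph{Products.} I would finish by induction on the width $w(M)$, the number of factors $t_i\neq1$ in $M=\prod_i t_i(\varphi_i)$, proving $M\frac{\partial}{\partial\varphi_j}\in\mathcal{L}_{\mathbb{T}^d}$ for every direction $j$; the base $w\le1$ is exactly the two preceding stages. In the inductive step I split off a single factor. If $\varphi_j$ is active, write $M=\bar M\,t_j(\varphi_j)$ with $\bar M$ free of $\varphi_j$, pick $p(\varphi_j)$ with $p'=t_j$, and use $\bar M\frac{\partial}{\partial\varphi_j}\in\mathcal{L}_{\mathbb{T}^d}$ (induction) together with $p(\varphi_j)\frac{\partial}{\partial\varphi_j}\in\mathcal{L}_{\mathbb{T}^d}$; then $\big[\bar M\frac{\partial}{\partial\varphi_j},\,p\frac{\partial}{\partial\varphi_j}\big]=\bar M\,t_j\frac{\partial}{\partial\varphi_j}=M\frac{\partial}{\partial\varphi_j}$. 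If $\varphi_j$ is not active, pick any active index $\ell$, write $M=M'\,t(\varphi_\ell)$, choose $s(\varphi_\ell)$ with $s'=t$, and compute $\big[M'\frac{\partial}{\partial\varphi_\ell},\,s\frac{\partial}{\partial\varphi_j}\big]=M'\,t\frac{\partial}{\partial\varphi_j}=M\frac{\partial}{\partial\varphi_j}$. In both cases the second term of the bracket vanishes precisely because the retained factor is independent of $\varphi_j$.

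The main obstacle is the off--diagonal stage. The fields $f^0_i,f^1_i,f^2_i$ are diagonal and their brackets remain diagonal, so on their own they generate only a direct sum of circle algebras, which is far from bracket generating once $d\ge2$; all inter--coordinate coupling must be squeezed out of the single lumped field $g_j$, and the delicate point is disentangling the coefficient $\sum_m\sin\varphi_m$ into individual cross--terms $c(\varphi_k)\frac{\partial}{\partial\varphi_j}$ and then restoring arbitrary frequencies. Once these off--diagonal seeds are in hand, the product stage is routine bookkeeping.
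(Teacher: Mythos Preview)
Your argument is correct and follows essentially the same route as the paper's proof: first the diagonal one-variable fields via Lemma~\ref{thm_sact1}, then the off-diagonal seeds $\cos\varphi_k\,\partial/\partial\varphi_j$ and $\sin\varphi_k\,\partial/\partial\varphi_j$ by bracketing $g_j$ with $f^0_k$ (the subtraction of $f^1_j$ is harmless but unnecessary, since $\partial/\partial\varphi_k$ already kills the $\sin\varphi_j$ term), then raising the frequency by bracketing with $\sin\varphi_k\,\partial/\partial\varphi_k$, and finally an induction on the number of nontrivial factors in which one factor is peeled off via an antiderivative. The paper's case split in the last stage (whether the degree-$s$ coefficient $M$ depends on $\varphi_j$) is the mirror image of yours (whether $\varphi_j$ is active in the full monomial), and the brackets used are the same up to relabeling.
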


\begin{proof}
      By the previous lemma   the monomial vector fields $\cos k_i\varphi_i \frac{\partial}{\partial \varphi_i}$, \linebreak   $\sin k_i\varphi_i \frac{\partial}{\partial \varphi_i}, \ i=1, \ldots , d,$   belong to the Lie algebra. So do  the vector fields
  \[ [f^0_j, g_i]=  \cos  \varphi_j   \frac{\partial}{\partial \varphi_i}, \   [f^0_j,[f^0_j, g_i]]=  -\sin  \varphi_j   \frac{\partial}{\partial \varphi_i} \]
for $i \neq j$ .

  If  $\sin  k \varphi_j   \frac{\partial}{\partial \varphi_i}$ for $k \leq l$ belong to $\mathcal{L}_{\mathbb{T}^d}$, then
  \[ \left[f^1_j, \sin  l \varphi_j   \frac{\partial}{\partial \varphi_i} \right] =\frac{l}{2}\left( \sin ((l+1)\varphi_j)\frac{\partial}{\partial \varphi_i} -
   \sin((l-1)\varphi_j)\frac{\partial}{\partial \varphi_i} \right) \]
and by induction in $l$ we conclude that all the monomial vector fields
$\cos   l\varphi_j   \frac{\partial}{\partial \varphi_i}$,   $\sin   l\varphi_j   \frac{\partial}{\partial \varphi_i}, \ i,j=1, \ldots , d,$ belong to $\mathcal{L}_{\mathbb{T}^d}$.

We define the degree of a monomial vector field \eqref{eq_monomial} as the  cardinality of the set  $\{i \in \{1, \ldots , d\}| k_i \neq 0\}$
and  proceed by induction in the degree.
Each monomial vector field of degree $s+1$  is either
$M(\varphi)\cos k_\alpha \varphi_\alpha \frac{\partial}{\partial \varphi_j} $ or   $M(\varphi)\sin k_\alpha \varphi_\alpha \frac{\partial}{\partial \varphi_j}$, where $M(\varphi)$ has degree $s$ and does not depend on $\varphi_\alpha$.

In  the first case if  $M(\varphi)$ does not depend on $\varphi_j$,
  and hence  $$\left[M(\varphi)\frac{\partial}{\partial \varphi_\alpha},\sin k_\alpha\varphi_\alpha\frac{\partial}{\partial \varphi_j}\right]=
 k_\alpha M(\varphi)\cos k_\alpha \varphi_\alpha \frac{\partial}{\partial \varphi_j}.$$

 If  $M(\varphi)$ depends on $\varphi_j$, then $\alpha \neq j$ and
  one can easily find a monomial $M_1(\varphi)$ of degree $s$  such that  $\frac{\partial}{\partial \varphi_j} M_1(\varphi)=M(\varphi)$.  Then  $$\left[ \cos k_\alpha \varphi_\alpha \frac{\partial}{\partial \varphi_j}, M_1(\varphi)  \frac{\partial}{\partial \varphi_j}\right]=M(\varphi)\cos k_\alpha \varphi_\alpha \frac{\partial}{\partial \varphi_j}.$$
In this way we conclude the step of induction and the proof.
\end{proof}
The Lie algebra strong approximating property for \eqref{eq_ensTd} follows from the lemma by classical approximation results for multivariate trigonometric polynomials.

In what regards the  formulation of
criteria of finite ensemble controllability there is some peculiarity   in the case of    $\mathbb{T}^1$.
    Note that for a given orientation of  $\mathbb{T}^1$ any  ensemble of $N$ points
      on $\mathbb{T}^1$ is ordered up to cyclic permutation.
      Two ensembles are equally ordered if the sequences of their indices are the same up to a cyclic permutation.

    %
%

%

\begin{thm}
\label{thm_Td}
  Control system \eqref{eq_csT1}  and \eqref{eq_ensTd} have  the following ensemble controllability properties:

i)   for $d>1$  system \eqref{eq_ensTd} is ensemble controllable on $\mathbb{T}^d$;

ii)   for $\mathcal{M}=\mathbb{T}^1$ system \eqref{eq_csT1} is  $C^0$-approximately controllable
in  $\mbox{Diff}_0(\mathbb{T}^1)$;

iii)  two  finite ensembles on $\mathbb{T}^1$
 can be steered one into another by means of  control system \eqref{eq_csT1} in  time $T>0$, if and only if they are
 equally ordered.

\end{thm}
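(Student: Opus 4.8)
The plan is to obtain all three claims by assembling the approximation lemmas already proved and, for the one-dimensional claim iii), to run a separate Rashevsky-Chow argument that accounts for the disconnectedness of the configuration space. For i), I would first note that the fields \eqref{eq_Tdvecfields} are trigonometric, hence bounded together with all their covariant derivatives, so Assumption \ref{asmp_b} is immediate. For Assumption \ref{asmp_s} I would invoke Lemma \ref{thm_sintor}: the Lie algebra $\mathcal{L}_{\mathbb{T}^d}$ contains every monomial field \eqref{eq_monomial}, and taking finite linear combinations yields every field $Q(\varphi)\frac{\partial}{\partial\varphi_j}$ with $Q$ a multivariate trigonometric polynomial. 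Given a target $C^m$-field $Y=\sum_j Y_j\frac{\partial}{\partial\varphi_j}$, I would approximate each $Y_j$ by the partial sums of its Fourier series; for $Y_j$ sufficiently smooth these converge in $C^0$ with uniformly bounded first derivatives (classical Jackson/Bernstein estimates for multivariate trigonometric polynomials), which is exactly the $\|X\|_{1,K}\le\ell$, $\|X-Y\|_{0,K}\le\eps$ requirement of \eqref{eq_lie_strong}. Since $\dim\mathbb{T}^d=d>1$, Theorems \ref{dethm} and \ref{thm_grodif} then apply and give ensemble controllability in the sense of the Section's convention.

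For ii), since $\mathbb{T}^1$ is compact we have $\mbox{Diff}^c_0(\mathbb{T}^1)=\mbox{Diff}_0(\mathbb{T}^1)$, and Lemma \ref{thm_sact1} already certifies Assumptions \ref{asmp_b} and \ref{asmp_s} for the fields \eqref{eq_t1vecfields}. Thus Theorem \ref{thm_grodif} immediately yields $C^0$-approximate controllability in $\mbox{Diff}_0(\mathbb{T}^1)$, with nothing further to prove.

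For iii) the two implications go as follows. For necessity, I would observe that for any admissible control the flow $P_t$ of \eqref{eq_csT1} is a diffeomorphism of $\mathbb{T}^1$ with $P_0=I$, hence orientation-preserving; an orientation-preserving homeomorphism of the circle preserves the cyclic order of any tuple of distinct points, so if $\gamma_\alpha$ is steered to $\gamma_\omega$ they must be equally ordered. For sufficiency I would return to the bracket-generating mechanism, adapted to $\dim\mathcal{M}=1$, where $(\mathbb{T}^1)^{(N)}$ is no longer connected. The key point is that the $N$-fold evaluation of the fields from Lemma \ref{thm_sact1} spans $T_\gamma(\mathbb{T}^1)^{(N)}\cong\mathbb{R}^N$ at every $\gamma=(\varphi^1,\ldots,\varphi^N)$ with distinct entries: since $\mbox{Lie}\{f_0,f_1,f_2\}$ contains all fields $h(\varphi)\frac{\partial}{\partial\varphi}$ with $h$ a trigonometric polynomial, and trigonometric polynomials interpolate arbitrary values at any finite set of distinct nodes on $\mathbb{T}^1$, the evaluation map $X\mapsto(X(\varphi^1),\ldots,X(\varphi^N))$ is surjective onto $\mathbb{R}^N$. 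Hence the $N$-folds are bracket generating at every ensemble. Applying the Rashevsky-Chow theorem on each connected component of $(\mathbb{T}^1)^{(N)}$ gives controllability within each component, and identifying these components with the classes of equally ordered ensembles shows that equally ordered $\gamma_\alpha,\gamma_\omega$ can be joined; the time-$T$ qualification then follows from the reparametrization Remark after the definition of finite ensemble controllability.

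I expect the main obstacle to be the sufficiency half of iii). One must both carry out the trigonometric-interpolation (equivalently, Vandermonde-type) rank computation showing the $N$-folds are bracket generating at every point, and, more delicately, justify the passage from the connected-manifold statement of Proposition \ref{thm_brag} (which assumes $\dim\mathcal{M}>1$) to the disconnected one-dimensional configuration space, by verifying that the connected components of $(\mathbb{T}^1)^{(N)}$ are precisely the cyclic-order classes so that Rashevsky-Chow delivers exactly the stated ``if and only if''.
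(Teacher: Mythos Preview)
Your proposal is correct and follows the same route as the paper. In fact, the paper gives no explicit proof of Theorem~\ref{thm_Td}: it simply remarks that the Lie algebra strong approximating property for \eqref{eq_ensTd} ``follows from the lemma by classical approximation results for multivariate trigonometric polynomials'' and then states the theorem, so your write-up of parts i) and ii) via Lemmas~\ref{thm_sintor}, \ref{thm_sact1} and Theorems~\ref{dethm}, \ref{thm_grodif} is exactly what the paper intends. For part iii) the paper only sets up the notion of ``equally ordered'' and leaves the argument to the reader; your necessity-by-orientation and sufficiency-by-Rashevsky--Chow-on-components argument (with the trigonometric interpolation verifying bracket generation on $(\mathbb{T}^1)^{(N)}$) is the natural completion, parallel to the analogous unproved item iii) of Theorem~\ref{thm_GH}.
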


\subsection{Ensemble controllability on the $2$-dimensional  sphere}
We construct   examples of  control systems  on  the  $2$-dimensional sphere  ${\mathbb S} \subset \mathbb{R}^3$,
 which demonstrate the property of ensemble controllability.  Both examples are related to
   the  study in \cite{AS08}   of the controllability of the Navier-Stokes equation on ${\mathbb S}$.

We  consider a Riemannian structure on ${\mathbb S}$,
   induced by the Euclidean structure of $\mathbb{R}^3 \supset {\mathbb S}$.
 If  $f: {\mathbb S} \to \mathbb{R}$  is  the  restriction onto ${\mathbb S}$ of a smooth function $F:\mathbb{R}^3 \to \mathbb{R}$,  then the spherical gradient
 \[\nabla_{\mathbb S} f(x)=\nabla F - \langle \nabla F , x \rangle Ex\]
 is the projection of the gradient $\nabla F$ onto the tangent bundle $T\mathbb{S}$ to $\mathbb{S}$.   Here $Ex$ stands for the Euler vector field in $\mathbb{R}^3$:    $Ex=\sum_{i=1}^{3}x_i\partial_i$.

In general if   $X$ is  a smooth vector field in $\mathbb{R}^3$,  then
the  projection onto  $T{\mathbb S}$
 of the restriction of $X$ to ${\mathbb S}$ is
 \[\pr_{\mathbb S} X(x)=X(x)-\langle X(x),x \rangle E(x), \ x \in {\mathbb S}.   \]
It is  smooth vector field on ${\mathbb S}$.

 Consider  standard symplectic  structure $\sigma_x(\cdot , \cdot )$ on ${\mathbb S} \subset \mathbb{R}^3$ defined by the area form. For $x \in {\mathbb S}$,
 $\xi , \eta \in T_x {\mathbb S}$ one has $\sigma_x(\xi , \eta )=\langle x , \xi , \eta \rangle$,
  where the latter trilinear form is the
  mixed product in $\mathbb{R}^3$.

 We  introduce
  the spherical divergence   $\mbox{div}_{\mathbb S}  \pr_{\mathbb S} X(x)$
 of $\pr_{\mathbb S} X(x)$  with respect to the area form $\sigma$.
To this end we consider the interior  product of the vector field $\pr_{\mathbb S} X(x)$ with the differential $2$-form $\sigma$;
  it is the $1$-form defined by
\begin{equation}\label{eq_1form}
  \eta \to \sigma(\pr_{\mathbb S} X(x) , \eta)= \langle x \times \pr_{\mathbb S} X(x), \eta \rangle =\langle x \times X(x), \eta \rangle ,
\end{equation}
 where $\times$ stands for the cross product in $\mathbb{R}^3$.
The exterior derivative of the $1$-form is the $2$-form  $\psi(x)\sigma$,  whose   coefficient $\psi(x)$
  coincides with  the spherical divergence $\mbox{div}_{\mathbb S} \pr_{\mathbb S} X(x)$.

To compute the exterior derivative we apply Stokes theorem
to the integral of the $1$-form \eqref{eq_1form}  along a closed curve on $\mathbb{S}$  and conclude  that
it equals to the flow of the curl of the vector field $x \times X(x)$ through
the spherical area circumvented  by the curve. Hence
\begin{eqnarray*}
  \mbox{div}_{\mathbb S} \pr_{\mathbb S} X(x)=\langle \mbox{curl}\left(E(x) \times X(x)\right),E(x) \rangle=\langle (\mbox{div} X)E(x),E(x)\rangle - \\  \mbox{div}  E(x)\langle X(x),E(x)\rangle=
\mbox{div} X -3  \langle X(x),E(x)\rangle .
\end{eqnarray*}
 In particular $\mbox{div}_{\mathbb S} \pr_{\mathbb S}  X(x)=\mbox{div } X$,  if $X$ is tangent to $\mathbb{S}$.

Once we have  defined spherical divergence   $\mbox{div}_{\mathbb S}$ and spherical gradient
$\nabla_{\mathbb S}$,  then spherical Laplacian of a function $f$ on ${\mathbb S}$ is defined as:
\[\Delta_{\mathbb S} f= \mbox{div}_{\mathbb S}  \nabla_{\mathbb S} f.   \]

 Consider the homogeneous harmonic polynomials on $\mathbb{R}^3 \setminus 0$ and take  their  restrictions
  onto  ${\mathbb S}$;  those   are called {\it spherical harmonics}.
  We call them  linear, quadratic, cubic, of $n$-th degree etc.,   if they
  are  restrictions of the  homogeneous polynomials of the corresponding  degree.
 Spherical harmonics are the
   eigenfunctions
  of the spherical Laplacian.

  Restriction of any  smooth function $\varphi$ in $\mathbb{R}^3$ onto  ${\mathbb S}$
  gives rise to the Hamiltonian vector field $\overrightarrow{\varphi}$ on ${\mathbb S}$, which is  defined by the relation:
    \[(\overrightarrow{\varphi}(x),\eta)=\sigma(\nabla \varphi (x),\eta)=\langle x , \nabla \varphi(x)  , \eta \rangle =(x \times \nabla \varphi (x), \eta), \ \eta \in T_x{\mathbb S};\]
        hence
        $\overrightarrow{\varphi}(x)=x \times \nabla \varphi(x), \ x \in \mathbb{S}$.

We provide an example of   Hamiltonian control system which has the property of
 approximate controllability  in the group of the area preserving diffeomorphisms on ${\mathbb S}$.

\begin{thm}
\label{thm_SDiff}
   Given three independent linear harmonics $(l^1,x)$,  $(l^2,x)$, $( l^3,x)$,  a quadratic harmonic $q(x)$, a cubic harmonic $c(x)$ and the corresponding
  Hamiltonian vector fields
  \begin{equation}\label{eq_hvf}
    \overrightarrow{l}^1(x), \overrightarrow{l}^2(x),\overrightarrow{l}^3(x), \overrightarrow{q}(x), \overrightarrow{c}(x),
  \end{equation}
 the control system
 \begin{equation}\label{eq_s2ensemble}
   \dot{x}=\sum_{i=1}^{3}\overrightarrow{l}^i(x) u_i(t)+ \overrightarrow{q}(x)v_2(t)+\overrightarrow{c}(x)v_3(t)
 \end{equation}
 is controllable in the space of finite ensembles on $\mathbb{S} $ and approximately controllable in the group  $\mbox{SDiff}_0\left({\mathbb S}\right)$ of the area preserving diffeomorphisms of $\mathbb{S} $.
\end{thm}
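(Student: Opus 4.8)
The plan is to verify that the five Hamiltonian vector fields \eqref{eq_hvf} satisfy the boundedness Assumption \ref{asmp_b} and the Lie algebra strong approximating Assumption \ref{asmp_s}, and then to invoke Theorem \ref{dethm} together with the area-preserving analogue of Theorem \ref{thm_grodif}. Boundedness is immediate because $\mathbb{S}$ is compact, so the whole difficulty resides in the strong approximating property, which, exactly as in the Euclidean and toral cases, splits into a Lie-algebraic generation statement and a regularity estimate.

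First I would pass to the language of Hamiltonians. Writing $\overrightarrow{\varphi}(x)=x\times\nabla\varphi(x)$, the assignment $\varphi\mapsto\overrightarrow{\varphi}$ is linear with kernel the constants, and it intertwines the Lie bracket with the Poisson bracket (up to sign),
\[
 [\overrightarrow{\varphi},\overrightarrow{\psi}]=\overrightarrow{\{\varphi,\psi\}},\qquad \{\varphi,\psi\}=\langle x,\nabla\varphi\times\nabla\psi\rangle .
\]
All generators are Hamiltonian, hence divergence-free, so every iterated bracket is again Hamiltonian and the generated flows stay area-preserving, i.e.\ inside $\mbox{SDiff}_0(\mathbb{S})$. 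Thus computing $\mbox{Lie}\{\overrightarrow{l}^1,\overrightarrow{l}^2,\overrightarrow{l}^3,\overrightarrow{q},\overrightarrow{c}\}$ reduces to computing the Poisson-bracket Lie algebra of functions generated by $l^1,l^2,l^3,q,c$.

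The core is the following generation lemma: the linear span of that Poisson-bracket Lie algebra is the space of all spherical harmonics of degree $\geq 1$, so that $\mbox{Lie}\{\ldots\}$ is $C^0$-dense in the area-preserving vector fields. I would establish it through two ingredients. First, the three linear harmonics span $\mathfrak{so}(3)$, and their Poisson action on the degree-$n$ spherical harmonics is precisely the irreducible $(2n+1)$-dimensional rotation representation; hence the rotation orbit of the single quadratic $q$ fills the whole $5$-dimensional space of quadratic harmonics, and the orbit of the single cubic $c$ fills the whole $7$-dimensional space of cubic harmonics. Second, a degree-raising induction: the Poisson bracket of a degree-$m$ and a degree-$n$ harmonic is a polynomial of degree $m+n-1$, whose top spherical-harmonic component I claim is nonzero for suitable representatives; with both an even generator ($q$) and an odd generator ($c$) in hand, the parity bookkeeping (even $+$ odd $\to$ even, odd $+$ odd $\to$ odd, etc.) lets bracketing climb one degree at a time through every degree, after which irreducibility again fills the entire degree-$n$ block from a single nonvanishing harmonic. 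This is the Agrachev--Sarychev saturation mechanism already employed for the Navier--Stokes system on $\mathbb{S}$ in \cite{AS08}.

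Finally I would upgrade density to the strong approximating property by controlling $C^1$ norms. For a Hamiltonian $Y$ that is $C^m$-smooth with $m$ large enough (depending on $\dim\mathbb{S}=2$), the partial sums of its Laplace (spherical-harmonic) series converge to $Y$ uniformly together with their first derivatives; the approximating fields therefore have $C^1$ norms bounded by a constant $\ell$ independent of the truncation, which is exactly \eqref{eq_lie_strong}. With Assumptions \ref{asmp_b} and \ref{asmp_s} in force, Theorem \ref{dethm} gives finite-ensemble controllability and the area-preserving version of Theorem \ref{thm_grodif} gives $C^0$-approximate controllability in $\mbox{SDiff}_0(\mathbb{S})$, the flows remaining area-preserving because the connecting diffeotopy and all Lie-algebra approximants are Hamiltonian. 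The main obstacle is the second ingredient of the generation lemma --- showing that the top-degree spherical-harmonic component of the climbing brackets does not vanish; the irreducibility of the degree-$n$ harmonics under $\mathfrak{so}(3)$ is precisely the device that turns a single nonvanishing bracket at each degree into generation of the whole degree-$n$ space.
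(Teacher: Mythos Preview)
Your proposal is correct and follows essentially the same route as the paper: verify the Lie algebra strong approximating property by (i) a generation lemma showing that the Poisson algebra of $l^1,l^2,l^3,q,c$ contains all spherical harmonics, and (ii) a $C^1$-approximation estimate for Laplace series, then invoke Theorems \ref{dethm} and \ref{thm_grodif}. The paper handles (i) by directly citing \cite[Theorem 10.4]{AS08} rather than sketching the $\mathfrak{so}(3)$-irreducibility and degree-raising induction as you do, and handles (ii) by quoting a specific quantitative result of Ganesh--Graham--Sivaloganathan \cite[Theorem 3.5]{GGS} giving $\|f-\mathcal{T}_nf\|_{C^k}\leq b_l n^{-(l-k)}\|f\|_{C^l}$, which is exactly the $C^1$-equiboundedness you need; you may want to cite such a result explicitly rather than asserting convergence of the Laplace partial sums in $C^1$.
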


\begin{proof}
 The following statement has been proved in   \cite[Theorem  10.4]{AS08}.
   \begin{prop}
  \label{thm_spheharm}
    The Lie algebra generated by the Hamiltonian
  vector fields
   \eqref{eq_hvf}
  contains all the symplectic  vector fields $\overrightarrow{h}$,  which  correspond  to
  harmonic homogeneous polynomials (spherical harmonics) $h(x)$,
  and therefore  is dense in the space of  all the divergence-free  vector fields.
  \end{prop}

Spherical harmonics form a complete system in $L_2(\mathbb{S} )$.
  To prove the Lie algebra
  strong approximating condition we  consider  the  expansions of functions on $\mathbb{S} $ in  Laplace series with respect to spherical harmonics.
  We apply  a result by  M.Ganesh, I.G.Graham \& J.Sivaloganathan  \cite[Theorem 3.5]{GGS} on the
  best approximation by Laplace series
  of smooth functions on the spheres ${\mathbb S}^m$  together with their derivatives up to some order.

  \begin{lem}
  \label{thm_ragoz}
  Let $C({\mathbb S})$ be the space of continuous functions on the sphere and $\mathcal{P}_n$ be
  the space of spherical  polynomials of degree $\leq n$.
   For each $n \geq 1$ there exist continuous linear
   operator  ${\mathcal T_n}: C({\mathbb S}) \mapsto \mathcal{P}_n$
  and for every $l  \geq 0$  a constant $b_l$ such that for all $k=0, \ldots , l; \ f \in  C^l({\mathbb S})$
  \[\left\|f - {\mathcal T}_n f       \right\|_{C^k} \leq b_l\left(\frac{1}{n}\right)^{l-k}\|f\|_{C^l}.    \]

      \end{lem}
 (This  result builds on   the previous work by D.L.Ragozin   and D.J.Newman \& H.S. Shapiro;  see references in \cite{GGS}.)

     Let $Y(x)$ be  a $C^2$-smooth divergence free (Hamiltonian) vector field on ${\mathbb S}$ and $\Upsilon$
     the corresponding  $C^3$-smooth  Hamiltonian.   By lemma \ref{thm_ragoz}
     \[ \|\Upsilon -  {\mathcal T}_n \Upsilon\|_{C^2} \leq  \frac{b_2}{n}\|\Upsilon\|_{C^3}   \]
for some constant $b_2>0$.

This implies that $T_n \Upsilon$ and its first and second derivatives $DT_n \Upsilon, \ D^2T_n \Upsilon$ converge uniformly
to $ \Upsilon, \ D \Upsilon,D^2 \Upsilon$ correspondingly as $n \to \infty$.
This means that the Hamiltonian vector fields $\overrightarrow{T_n \Upsilon}$  converge uniformly to $Y$, and their  derivatives
$D\overrightarrow{T_n \Upsilon}$  converge uniformly to $DY$ as $n \to \infty$.  Hence  the
derivatives $D\overrightarrow{T_n \Upsilon}$ are equibounded, and the vector fields $\overrightarrow{T_n \Upsilon}$ are equilipschitzian.
According to  proposition \ref{thm_spheharm}  the vector fields $\overrightarrow{T_n \Upsilon}$  belong to the Lie algebra
generated by the vector fields  \eqref{eq_hvf}
and hence the Lie algebra strong approximating condition holds  for control system \eqref{eq_s2ensemble}.
\end{proof}

We now pass to finding  an example of control system, which is approximately controllable in the group of  smooth diffeomorphisms $\mbox{Diff}_0\left({\mathbb S}\right)$  of ${\mathbb S}$.

By Helmholtz-Hodge theorem  each smooth vector field $f$ on ${\mathbb S}$ can be represented as a sum of a gradient vector field
$f^\nabla=\nabla_{\mathbb S} F$ and an area-preserving (and symplectic  in the 2D case) vector field $f^\vdash$.
One may think of  constructing  the desired  example,  by joining some gradient vector fields to  Hamiltonian vector fields  \eqref{eq_hvf}.

\begin{thm}
\label{thm_Diff}
  Let $\overrightarrow{l}^1(x), \overrightarrow{l}^2(x),\overrightarrow{l}^3(x), \overrightarrow{q}(x), \overrightarrow{c}(x),$
  be the
Hamiltonian vector fields
\eqref{eq_hvf}. Let
 $\tilde l(x)=(l,x)$, $\tilde{q}(x)$, be a linear and a quadratic spherical harmonics,
 and  $\tilde l' (x)=\nabla_{S} (l,x),  \tilde{q}'(x)=\nabla_{S}\tilde{q}(x)$  be the corresponding
 gradient vector fields.

 The control system on the $2$-dimensional sphere ${\mathbb S}$
 \begin{equation}\label{eq_s2_fullensemble}
   \dot{x}=\sum_{i=1}^{3}\overrightarrow{l}^i(x) u_i(t)+ \overrightarrow{q}(x)v_2(t)+\overrightarrow{c}(x)v_3(t)
   +l' (x)w_1(t)  +\tilde{q}'(x)w_2(t)
 \end{equation}
 is controllable in the space of finite ensembles on ${\mathbb S}$  and approximately controllable in the group  $\mbox{Diff}_0\left({\mathbb S}\right)$ of the  diffeomorphisms of  ${\mathbb S}$.
\end{thm}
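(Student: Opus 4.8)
The plan is to verify that the vector fields defining system \eqref{eq_s2_fullensemble} satisfy Assumptions \ref{asmp_b} and \ref{asmp_s}; the two conclusions then follow from Theorems \ref{dethm} and \ref{thm_grodif} exactly as in the convention adopted at the start of this Section. Boundedness (Assumption \ref{asmp_b}) is immediate, since all five fields are smooth on the compact manifold $\mathbb{S}$. The whole difficulty is concentrated in the Lie algebra strong approximating property. Following the Helmholtz--Hodge decomposition, an arbitrary $C^m$-smooth vector field on $\mathbb{S}$ splits as $Y=\overrightarrow{H}+\nabla_{\mathbb S}F$ into a Hamiltonian (area-preserving) part and a gradient part, so it suffices to approximate each summand by elements of $\mathfrak g:=\mathrm{Lie}\{\overrightarrow{l}^1,\overrightarrow{l}^2,\overrightarrow{l}^3,\overrightarrow{q},\overrightarrow{c},\tilde l',\tilde q'\}$. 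The Hamiltonian part is handled verbatim as in the proof of Theorem \ref{thm_SDiff}: Proposition \ref{thm_spheharm} places every $\overrightarrow{h}$ with $h$ a spherical harmonic into $\mathfrak g$, and Lemma \ref{thm_ragoz} furnishes the uniform $C^1$ approximation with equibounded first derivatives. Thus the genuinely new content is to prove that $\mathfrak g$ also contains the gradient field $\nabla_{\mathbb S}h$ for \emph{every} spherical harmonic $h$.

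The key structural observation is that $\mathfrak g$ is invariant under the rotation group $SO(3)$. Indeed $\overrightarrow{l}^1,\overrightarrow{l}^2,\overrightarrow{l}^3$ are the Killing fields generating $SO(3)$, and since each $\overrightarrow{l}^i$ is Killing one has $[\overrightarrow{l}^i,\nabla_{\mathbb S}g]=\nabla_{\mathbb S}(\overrightarrow{l}^i g)$, so $\mathrm{ad}_{\overrightarrow{l}^i}$ preserves each space $V_n^{\mathrm{grad}}:=\nabla_{\mathbb S}V_n$, where $V_n$ denotes the degree-$n$ spherical harmonics. Because $V_n$ is an irreducible $SO(3)$-module, the intersection $\mathfrak g\cap V_n^{\mathrm{grad}}$ is a submodule and hence is either $0$ or all of $V_n^{\mathrm{grad}}$; the same dichotomy holds on the Hamiltonian side. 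Since the given generators $\tilde l'=\nabla_{\mathbb S}(l,x)$ and $\tilde q'=\nabla_{\mathbb S}\tilde q$ are nonzero elements of $V_1^{\mathrm{grad}}$ and $V_2^{\mathrm{grad}}$, irreducibility already yields $V_1^{\mathrm{grad}},V_2^{\mathrm{grad}}\subset\mathfrak g$.

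It remains to climb to higher degrees, and this is where the main obstacle lies. The plan is an induction on the degree: assuming $V_k^{\mathrm{grad}}\subset\mathfrak g$ for some $k\ge 2$, I bracket a gradient field with a Hamiltonian one and extract the gradient part. From $\mathrm{div}_{\mathbb S}\overrightarrow h=0$ and the identity $\mathrm{div}_{\mathbb S}[\overrightarrow h,\nabla_{\mathbb S}g]=\overrightarrow h(\Delta_{\mathbb S}g)=-k(k+1)\{h,g\}$ for $g\in V_k$, the Helmholtz--Hodge gradient part of $[\overrightarrow h,\nabla_{\mathbb S}g]$ equals $-k(k+1)\,\nabla_{\mathbb S}\Delta_{\mathbb S}^{-1}\{h,g\}$, a gradient field built from the Poisson bracket $\{h,g\}=\overrightarrow h(g)$. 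Taking $h\in V_2$ (all of the Hamiltonian $V_2$ is available), one checks that $\{h,g\}$ is a polynomial of degree $k+1$ on the sphere, whose top harmonic component lies in $V_{k+1}$. Since by induction and Proposition \ref{thm_spheharm} we may select the most convenient highest-weight representatives of $V_k^{\mathrm{grad}}$ and of the Hamiltonian $V_2$, the heart of the matter is the \emph{non-vanishing} of this top component: writing $z=x_1+ix_2$, a model computation gives $\{z^{k},\bar z^{2}\}$ equal to a nonzero multiple of $z^{k-1}\bar z\,x_3$, a degree-$(k+1)$ monomial not divisible by $|x|^2$, hence with nonzero $V_{k+1}$-projection. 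Subtracting the (already available) lower-degree gradient parts and the Hamiltonian part of the bracket, one obtains a nonzero element of $V_{k+1}^{\mathrm{grad}}$, whence $V_{k+1}^{\mathrm{grad}}\subset\mathfrak g$ by irreducibility. This closes the induction, so $\nabla_{\mathbb S}h\in\mathfrak g$ for all spherical harmonics $h$. Expanding $F$ in its Laplace series and invoking Lemma \ref{thm_ragoz} once more yields uniform $C^1$ approximation of $\nabla_{\mathbb S}F$ with equibounded derivatives, completing the verification of Assumption \ref{asmp_s} and hence the theorem. I expect the delicate point to be precisely this non-vanishing of the top harmonic component of $\{h,g\}$, i.e. ensuring that each inductive step actually gains one degree rather than collapsing into lower-order terms.
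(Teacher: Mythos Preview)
Your proposal is correct and follows the same overall strategy as the paper: reduce to the Lie algebra strong approximating property, handle the Hamiltonian part via Proposition~\ref{thm_spheharm} and Lemma~\ref{thm_ragoz}, use $SO(3)$-irreducibility of the spaces $V_n$ of degree-$n$ harmonics (the paper phrases this via Maxwell's theorem, Lemma~\ref{thm_maxw}) to reduce the gradient part to producing \emph{one} nonzero element of $V_{k+1}^{\mathrm{grad}}$ at each inductive step, and then close the argument with Lemma~\ref{thm_ragoz}.

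The one genuine difference is the inductive mechanism. You climb degrees by bracketing a \emph{Hamiltonian} field with a gradient, $[\overrightarrow h,\nabla_{\mathbb S}g]$ with $h\in V_2,\ g\in V_k$, obtaining $\operatorname{div}_{\mathbb S}[\overrightarrow h,\nabla_{\mathbb S}g]=-k(k+1)\{h,g\}$; then you must project $\{h,g\}$ onto $V_{k+1}$ and verify non-vanishing via your highest-weight computation $\{z^k,\bar z^2\}=-4ik\,z^{k-1}\bar z\,x_3$ (which indeed is not divisible by $|x|^2$, so its $V_{k+1}$-component is nonzero). The paper instead brackets \emph{two gradient} fields, $[\nabla_{\mathbb S}f,\nabla_{\mathbb S}g]$ with $g=x_3\in V_1$ and $f=F(x_1,x_2)\in V_k$; the explicit formula of Lemma~\ref{thm_div_br} gives $\operatorname{div}_{\mathbb S}[\nabla_{\mathbb S}f,\nabla_{\mathbb S}g]=-(k-1)(k+4)k\,x_3F(x_1,x_2)$, and since $\langle\nabla x_3,\nabla F(x_1,x_2)\rangle=0$ the product $x_3F(x_1,x_2)$ is \emph{already} a spherical harmonic of degree $k+1$, so no projection or non-vanishing argument is needed. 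Your route is slightly more conceptual (Poisson brackets and $SO(3)$-modules), while the paper's choice of bracket buys a cleaner endgame with no lower-order terms to subtract on the divergence side; both are valid.
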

\begin{proof}
Finite ensemble controllability  follows immediately from the previous theorem.
Key technical result for proving controllability in $\mbox{Diff}_0\left({\mathbb S}\right)$ is
\begin{prop}
 \label{prop_harmvf}
The Lie algebra $\mathcal{L}$,  generated by the
  vector fields
 \[ \overrightarrow{l}^1(x), \overrightarrow{l}^2(x),  \overrightarrow{l}^3(x), \overrightarrow{q}(x), \overrightarrow{c}(x), \tilde l'(x), \tilde{q}'(x),\]
  contains all the Hamiltonian vector fields $\overrightarrow{h}$ and all the gradient vector fields $\nabla_{\mathbb S} h$,
   corresponding to all the  spherical harmonics $h$ on ${\mathbb S}$.
 \end{prop}

Lie algebra strong approximating property
would follow from this fact
by virtue of  approximation results for spherical harmonics and Laplace series, which we used
above in the proof of lemma \ref{thm_ragoz}.

Let  $\mathcal{L}_{div}$
be the image of the linear space $\mathcal{L}$ under the action of the linear operator $\mbox{div}_{\mathbb S}$.

\begin{prop}
\label{L_div}
The  linear space $\mathcal{L}_{div}$   contains all the spherical harmonics  on $S$.
 \end{prop}

Assuming the result  to hold, we  accomplish the proof of  proposition \ref{prop_harmvf}.
 Let $h$ be   any
 spherical harmonic, which without loss of generality we may assume   to be  homogeneous.
 %
If  $h=\mbox{div}_{\mathbb S} f$ and $f \in \mathcal{L}$ then
$\mbox{div}_{\mathbb S} \nabla_{\mathbb S} h=\alpha h$ and hence
 the  vector field  $\overrightarrow{p}=\nabla_{\mathbb S} h - \alpha f$
 is  divergence-free and therefore symplectic polynomial  vector field    on  ${\mathbb S}$.
 Without loss of generality one may assume that $p$ is a restriction onto  ${\mathbb S}$ of a harmonic  polynomial $\hat p$.  \footnote{Any restriction of a polynomial in $\mathbb{R}^3$ onto  ${\mathbb S}$ can be represented as a restriction onto  ${\mathbb S}$ of a harmonic (nonhomogeneous) polynomial}
  All polynomial symplectic vector field, which correspond to spherical harmonics,  belong to $\mathcal{L}$ by proposition \ref{thm_spheharm}  and hence  $\nabla_{\mathbb S} h \in \mathcal{L}$.

Employing Maxwell's theorem  we can reduce proposition
\ref{L_div} to a  weaker statement.

\begin{lem}
\label{thm_maxw}
The  linear space $\mathcal{L}_{div}$   contains all the spherical harmonics if and only if, for each $k$,
 $\mathcal{L}_{div}$ contains a homogeneous spherical harmonic of degree $k$.
 \end{lem}
\begin{proof}
For each $ l \in \mathbb{R}^3$
the symplectic vector field $\overrightarrow{l}$,  defines  the rotation $e^{\overrightarrow{l}} \in SO(3)$ of $\mathbb{R}^3$ and of the sphere $\mathbb{S}$.   By direct computation the adjoint action $\mbox{Ad} \left(e^{\overrightarrow{l}}\right)$ of the rotation onto a gradient vector field $\nabla_Sf(x)$ transforms it into the gradient vector field
 $ \nabla_Sf(e^{\overrightarrow{l}}(x))$.
By Maxwell's theorem (\cite{Arn})  the group of
rotations $e^{\overrightarrow{l}}$ act transitively on the space of spherical harmonics of a given degree.

By the assumptions of the theorem the Lie algebra  $\mathcal{L}$ contains  linearly independent vector fields
$\overrightarrow{l}^1(x)$, $\overrightarrow{l}^2(x)$,  $\overrightarrow{l}^3(x)$.
If a spherical  harmonic $ h$ is  homogeneous of degree $k$ and  belongs to  $\mathcal{L}_{div}$, then by the aforesaid  $\nabla_{\mathbb S} h \in \mathcal{L}$,  and
acting   onto $\nabla_{\mathbb S} h$ by   $\mbox{Ad}\left(e^{\overrightarrow{l}}\right), \ l \in \mathbb{R}^3$  we
conclude by transitivity that the   gradients of all spherical harmonics of degree $k$ are in $\mathcal{L}$  and then the harmonics themselves  are in
$\mathcal{L}_{div}$.
\end{proof}

To prove the existence of spherical harmonics of each degree in $\mathcal{L}_{div}$
 we start with  two  technical lemmas, whose proofs can  be found in  the Appendix.


%
%


\begin{lem}
\label{thm_euler_id}
  For a   harmonic polynomial  $F$, which is  homogeneous  of degree $k$ in $\mathbb{R}^3$,
there holds
\begin{eqnarray*}
  \langle \nabla F(x) , x \rangle =k F(x), \   \
  D^2F(x) x=(k-1) \nabla F(x),    \\  %
  {[ \nabla F(x), Ex]}=
  (2-k) \nabla F(x) .
\end{eqnarray*}
\end{lem}

\begin{lem}
\label{thm_div_br}
  For $f,g$, which are the restrictions  onto $\mathbb{S}$   of the
  harmonic polynomials $F,G$,  homogeneous  of degrees $k$ and $l$ in $\mathbb{R}^3$,  there holds
  \begin{eqnarray}\label{eq_div_fg}
    \mbox{div}_{\mathbb S} [\nabla_{\mathbb S} f, \nabla_{\mathbb S} g]=\mbox{div} [\nabla_{\mathbb S} f, \nabla_{\mathbb S} g]= \nonumber \\
    (k-l)(k+l+3)\left(\langle \nabla F , \nabla G \rangle|_{\mathbb S} - kl fg\right) .
  \end{eqnarray}
  \end{lem}

\begin{cor}
\label{thm_divfg}
Let $g(x)=x_3|_{\mathbb S}$ and $f(x)=F(x_1,x_2)|_{\mathbb S}$  be the restriction onto $\mathbb{S}$ of the
  harmonic polynomial $F(x_1,x_2)$ homogeneous  of degree  $k$
  in the variables $x_1,x_2$.
  Then
  \begin{equation}\label{eq_x3f}
     \mbox{div}_{\mathbb S}[\nabla_{\mathbb S} f, \nabla_{\mathbb S} g] =-(k-1)(k+4)kx_3f(x_1,x_2)
  \end{equation}
  and the right-hand side
  is a spherical harmonic polynomial
  homogeneous of degree $k+1$.
  \end{cor}
We prove   the corollary.
Formula \eqref{eq_x3f} follows from \eqref{eq_div_fg}.
As far as
   $x_3$  and $F(x_1,x_2)$ are both
 harmonic, then
\[\Delta (x_3F(x_1,x_2))(x)=2\langle \nabla x_3, \nabla F(x_1,x_2) \rangle =0\]
 and hence $x_3F(x_1,x_2)$ is harmonic in $\mathbb{R}^3$ and the restriction
$x_3F(x_1,x_2)|_{\mathbb{S}}$ is a spherical harmonic of degree $k+1$.


Now we complete the proof of Lemma \ref{thm_maxw}. As far as  the linear harmonic vector field $\tilde{l}'=\nabla_{\mathbb S} \tilde l$, and the quadratic harmonic vector field $\tilde{q}'=\nabla_{\mathbb S} \tilde q$  belong
to $\mathcal{L}$ and the group of
rotations $e^{\overrightarrow{l}}$ act transitively on the space of spherical harmonics of given degree,
we
can obtain by the action the   gradients of all the spherical harmonics of degrees $1$ and $2$  and, in particular,
 $\nabla_{\mathbb S} x_3$ and $\nabla_{\mathbb S} f(x_1,x_2)$.

Then $ [\nabla_{\mathbb S} x_3, \nabla_{\mathbb S} f(x_1,x_2)] \in \mathcal{L}$ and by  Corollary
\ref{thm_divfg}
 \[ \mbox{div}_{\mathbb S}[\nabla_{\mathbb S} x_3, \nabla_{\mathbb S} f(x_1,x_2)] =-12x_3  f(x_1,x_2)\]
  with the right-hand side being  a spherical harmonic  of degree $3$, which  belongs  to $\mathcal{L}_{div}$.
  Then by Maxwell theorem we conclude that  the gradients of all the spherical harmonics of degree $3$ belong to   $\mathcal{L}$.
  The proof can be  completed by  induction in the degree of harmonics with  Corollary \ref{thm_divfg} applied at each  induction step.
\end{proof}

\section{Appendix: proofs of technical lemmas}

\subsection{Proof of Lemma \ref{thm_Herm_conv}}
     As far as the function $Y_j(x)e^{\gamma (x)}$  is $C^{\left[\frac{d}{2}\right]+2}$-smooth,  the partial sums of series \eqref{eq_herm_ser} converge uniformly
          to it according to   \cite[Propositions 7.1.2, 7.1.5,  Corollary 7.1.3]{DX}.
          \footnote{The convergence is determined by  the interplay of two entities: the Christoffel constant $\Lambda_n$ (or the related Lebesgue constant) and the approximation error rate $E_n$
          of the function $Y_j(x)e^{\gamma (x)}$  by means of the $n$-truncations of the Hermite series.  For the uniform convergence it suffices (\cite[Proposition 7.1.2]{DX}) that $\Lambda_n \sim n^{-d}$ as $n \to \infty$ and $|E_n| \leq n^{-\frac{d}{2}-\beta}, \ \beta >0$.  For the first fact  see \cite[Proposition 7.1.5]{DX};  for the second fact, valid for  $C^{\left[\frac{d}{2}\right]+2}$-smooth functions, see \cite[Corollary  7.1.3]{DX}.}
    %
%
%
%

Thus  for each $\eps >0$ one can find sufficiently large $n$ for which  the  partial sums   $S_n(x)=\sum_{m: \ |m| \leq n}  c_{m_1, \ldots , m_d}
 H_{m_1, \ldots , m_d}(x)$
satisfy
\[\|S_n(x) - Y_j(x)e^{\gamma (x)}\|_{0,K} < \eps , \]
and hence
\[\|S_n(x)e^{-\gamma (x)} - Y_j(x)\|_{0,K} < \eps . \]

To get a bound  for the (first) partial derivative, say in $x_1$, of the functions
$S_n(x)e^{-\gamma (x)}$ we note that
\[ \frac{\partial}{\partial x_1} \left(H_{m_1, \ldots , m_d}(x)e^{-\gamma (x)}\right)=- H_{m_1+1, \ldots , m_d}(x)e^{-\gamma (x)} ,  \]
and therefore
\[ \frac{\partial}{\partial x_1}\left( S_n(x)e^{-\gamma (x)}\right)=
-\sum_{m: \ |m| \leq n}  c_{m_1, \ldots , m_d} H_{m_1+1, \ldots , m_d}(x) e^{-\gamma (x)}.   \]

We prove that the latter series
$\sum_{m}  c_{m_1, \ldots , m_d} H_{m_1+1, \ldots , m_d}(x)$ is the Fourier-Hermite series for the function
$\frac{\partial Y_j(x)}{\partial x_1}e^{\gamma(x)} \in C^{\left[\frac{d}{2}\right]+1}$ and hence
converges uniformly to $\frac{\partial Y_j(x)}{\partial x_1}$ as $n \to \infty$.

Multivariate Hermite polynomials  are factorable into the products of univariate Hermite polynomials:
   \[ H_{m_{j1}, \ldots , m_{jd}}(x_1, \ldots , x_d)=H_{m_{j1}}(x_1)  \cdots H_{m_{jd}}(x_d)\]
and therefore we may proceed  as in the univariate case. It suffices to prove that
given $ Y_j(x)e^{\gamma(x)}  \sim   \sum_{m}  c_{m} H_{m}(x), \ x \in \mathbb{R} $ it follows
\begin{equation}\label{eq_Yprime}
  Y'_j(x)e^{\gamma(x)}  \sim   -\sum_{m}  c_{m} H_{m+1}(x), \ x \in \mathbb{R} .
\end{equation}
From  the formulae for the Fourier-Hermite coefficients it follows that
\[ c_m= \frac{\int_{\mathbb{R}}Y_j(x)H_m(x)dx}{\int_{\mathbb{R}}(H_m(x))^2 e^{-\gamma(x)}dx}.\]
Since  $H'_{m+1}(x)=(m+1)H_m(x)$  we get
\[c_m= \frac{\int_{\mathbb{R}}Y_j(x)H'_{m+1}(x)dx}{(m+1)\int_{\mathbb{R}}(H_m(x))^2 e^{-\gamma(x)}dx} .    \]
From the identities   $\int_{\mathbb{R}}(H_{m}(x))^2 e^{-\gamma(x)}dx=\sqrt{2\pi}m!, \ m=0,1,2 \ldots $
we conclude that the denominator coincides with  $\int_{\mathbb{R}}(H_{m+1}(x))^2 e^{-\gamma(x)}dx$.
Integrating the numerator by parts we bring it to the form \linebreak $-\int_{\mathbb{R}}Y'_j(x)H_{m+1}(x)dx$
and thus conclude \eqref{eq_Yprime}.

By the above cited  approximation results from \cite{DX}  the partial derivatives $\frac{\partial}{\partial x_i}\left( S_n(x)e^{-\gamma (x)}\right)$ converge uniformly to $\frac{\partial Y_j(x)}{\partial x_i}$
as $n \to \infty$ and hence are upper equibounded for all  $n$.



\subsection{Proof of Lemma \ref{thm_euler_id}}
First equality is the    well known Euler identity for  homogeneous functions.

Differentiating the identity
\[ \forall t \in \mathbb{R}, \ x,y \in \mathbb{R}^3: \ \nabla F(x+t y) \cdot (x+ty)=k F(x+ty) \]
in $t$ at $t=0$ we conclude
\[D^2F(x)y \cdot x +\nabla F (x) \cdot y =k \nabla F (x) \cdot y  \     \]
and hence $\forall y: \ D^2F(x)x \cdot y=  (k-1) \nabla F (x) \cdot y$ wherefrom the second equality   follows.

The third equality follows   from the  previous two directly.

\subsection{Proof of Lemma \ref{thm_div_br}}
By direct computation with the use of Euler identity:
\begin{eqnarray*}
   [\nabla_{\mathbb S} f , \nabla_{\mathbb S} g]= [\pr \nabla F, \pr \nabla G]= [\nabla F - \langle \nabla F, x\rangle E(x), \nabla G -
\langle \nabla G, x\rangle E(x) ]= \\
 {[\nabla F(x) - (kF(x))E(x), \nabla G(x) -
(lG(x))E(x) ]}.
\end{eqnarray*}

 By simple manipulation with  application of the identities of Lemma \ref{thm_euler_id}
 we get
 \begin{eqnarray*}
   [\nabla_{\mathbb S} f , \nabla_{\mathbb S} g]=[\nabla F, \nabla G] -(2-k)(lG(x))\nabla F(x)+(2-l)(kF(x))\nabla G(x)+ \\
  (k-l)\langle  \nabla F(x) , \nabla G(x) \rangle E(x)+    kl(l-k)(F(x)G(x))E(x) .
 \end{eqnarray*}

Recall that  for $F,G$, which are  harmonic in $\mathbb{R}^3$, their gradients $\nabla F, \nabla G$
are divergence-free, and so is $[\nabla F, \nabla G]$.

Calculating the divergence of the right-hand side and using the identities of Lemma \ref{thm_euler_id}
  we get the  result we seek.

\end{document}